\newcommand\eps{{\epsilon}}
\newcommand\grad{{\bf \nabla}}
\newcommand\nvec{{\bf n}}
\newcommand\xvec{{\bf x}}
\newcommand\uvec{{\bf u}}
\newcommand\mvec{{\bf m}}
\newcommand\pvec{{\bf p}}
\newcommand{\xbar}{\bar{\mathbf{x}}}
\newcommand\xhat{\hat{\bf x}}
\newcommand\Evec{{\bf E}}
\newcommand\Fvec{{\bf F}}
\newcommand\Gvec{{\bf G}}
\newcommand\Xvec{{\bf X}}
\newcommand\Yvec{{\bf Y}}
\newcommand{\Wvec}{\mathbf{W}}
\newcommand\Qvec{{\bf Q}}
\newcommand\Vvec{{\bf V}}
\newcommand\Hvec{{\bf H}}
\newcommand\Vbar{\bar{\bf V}}
\newcommand{\Lbar}{\bar{L}}
\newcommand{\Qbar}{\bar{\mathbf{Q}}}
\newcommand{\Rr}{{\mathbb R}}
\newcommand{\N}{\mathbb{N}}
\renewcommand{\d}{\mathrm{d}}
\DeclareMathOperator{\tr}{tr}
\newcommand\norm[1]{\|#1\|}
\newcommand{\abs}[1]{\left|#1\right|}
\newcommand{\der}[1]{{#1}^{\prime}}
\newcommand{\sder}[1]{{#1}^{\prime\prime}}
\newtheorem{theor}{Theorem}
\newtheorem{lemma}[theor]{Lemma}
\newtheorem{proposition}[theor]{Proposition}
\theoremstyle{definition}
\newtheorem{remark}{Remark}[section]
\newtheorem{step}{Step}
\title{Radial Symmetry on Three-dimensional Shells in the Landau-de Gennes Theory}
\author{Giacomo Canevari}
\thanks{GC's present address is: Mathematical Institute, University of Oxford, Andrew Wiles Building,
 Radcliffe Observatory Quarter, Woodstock Road, Oxford~OX2~6GG, United Kingdom.}
\address[G. Canevari]{Sorbonne Universit\'es, UPMC Univ Paris 06, CNRS, UMR 7598,
Laboratoire Jacques-Louis Lions, 4~place~Jussieu, 75005~Paris, France.}
\email{canevari@maths.ox.ac.uk}
\author{Mythily Ramaswamy}
\address[M. Ramaswamy]{Tata Institute of Fundamental Research, Centre for Applicable Mathematics, 
Sharada Nagar, Chikkabommasandra, Bangalore~560065, India.}
\email{mythily@math.tifrbng.res.in}
\author{Apala Majumdar}
\thanks{AM is the \emph{corresponding author}.
 AM's research is also supported by an EPSRC Career Acceleration Fellowship EP/J001686/1,
 an OCIAM Visiting Fellowship and the Keble Advanced Studies Centre.}
\address[A. Majumdar, corresponding author]{Department of Mathematical Sciences, University of Bath, Claverton Down, Bath~BA2~7AY, United~Kingdom.}
\email{a.majumdar@bath.ac.uk}
\begin{document}

\begin{abstract}
We study the
radial-hedgehog solution on a three-di\-men\-sio\-nal (3D)
spherical shell with radial boundary conditions, within the
Landau-de Gennes theory for nematic liquid crystals. 
We prove that the radial-hedgehog solution is the unique minimizer of the Landau-de Gennes energy in two separate regimes:
(i) for thin shells when the temperature is below the critical nematic supercooling temperature and 
(ii) for a fixed shell width at sufficiently low temperatures. In case (i), we provide explicit geometry-dependent criteria 
for the global minimality of the radial-hedgehog solution.
\end{abstract}

\subjclass[2010]{
 	  35Q35      
          35J20      
          35B06      
          76A15}     
\keywords{Nematic liquid crystals, Landau-de Gennes theory, radial-hedeghog, mi\-ni\-mi\-zing con\-fi\-gu\-ra\-tions, stable configurations.}

\maketitle

\section{Introduction}
Nematic liquid crystals are anisotropic liquids with long-range
orientational ordering i.e. the constituent molecules have no translational order but exhibit directional order 
in the sense that they tend to align along certain distinguished directions~\cite{dg, newtonmottram}. 
Nematic liquid crystals have generated tremendous academic interest in recent years,
partly for fundamental scientific reasons and partly for their widespread applications in materials science and nano-technology~\cite{lagerwall}. 
Defects in liquid crystals fascinate mathematicians and applied scientists alike and there has been substantial 
recent analytical work on defects, following the seminal work of Schopohl and Sluckin in~\cite{sluckin}. 
Whilst defects pose numerous mathematical and applications-oriented challenges for liquid crystal research, 
it is also important to rigorously analyze defect-free configurations and in particular, rigorously characterize model situations
where we expect to see stable defect-free nematic configurations and if such defect-free states can be exploited for new applications. 

Continuum theories for nematics e.g.
Oseen-Frank, Ericksen and Landau-de Gennes theories, of which the Landau-de Gennes
theory is the most general, have received
considerable attention in the mathematical and modelling literature
\cite{partialcrystal, ericksen, lin}.
The radial-hedgehog solution is the classical example of a point
defect on a three-dimensional spherical droplet, in the Landau-de Gennes theory~\cite{dg,virga}.
The radial-hedgehog solution has a relatively straightforward structure: the molecules point radially outwards everywhere
away from the droplet centre, with a disordered ``isotropic'' defect core located at the centre.
The radial-hedgehog solution has received substantial mathematical interest in recent 
years~\cite{am2, sima2012, lamy, ignat2014, canevari2014}. This is, to some extent, because the radial-hedgehog solution
is a relatively rare example of an explicit critical point of the Landau-de Gennes energy functional and therefore, 
naturally more amenable to analytical methods.
Further, the radial-hedgehog solution is analogous to the degree $+1$-vortex in the Ginzburg-Landau theory for superconductivity~\cite{bbh}.
The degree $+1$-vortex is a well studied solution in the Ginzburg-Landau community~\cite{bbh, millot, pisante}. 
This means that we can borrow several ideas and methods from Ginzburg-Landau theory to address non-trivial questions
about the structure and stability of the radial-hedgehog solution. However, there is a crucial difference
between Ginzburg-Landau theory and Landau-de Gennes theory. In the Ginzburg-Landau framework, 
we typically deal with three-dimensional vectors on $\Rr^3$ i.e. maps $\uvec\colon \Rr^3 \to \Rr^3$, or more generally, 
$N$-dimensional vectors defined on~$\Rr^N$. When we work with the Landau-de Gennes theory, 
we study a nonlinear coupled system of partial differential equations for a five-dimensional tensor-valued
$\Qvec$-order parameter defined on a three-dimensional domain i.e.
we study maps, $\Qvec\colon \Omega \subset \Rr^3 \to \Rr^5$. There
are two additional degrees of freedom which can drastically alter
the solution landscape in spite of apparent mathematical
similarities between the Landau-de Gennes system and the
Ginzburg-Landau system~\cite{amaz, sima2012}.

For example, in~\cite{millot, pisante},
the authors prove that the degree $+1$-vortex solution is the
unique solution (up to translation and rotation) of the
Ginzburg-Landau equations on $\Rr^3$, subject to certain natural
energy bounds and topologically non-trivial boundary conditions.
However, it is
known that the analogous radial-hedgehog solution loses stability in the Landau-de Gennes framework, on a three-dimensional droplet with radial boundary
conditions, for sufficiently low temperatures, see~\cite{sonnet, mkaddem&gartland2, sima2012, am2}.
The geometry and the boundary conditions enforce the
radial-hedgehog solution to have an ``isotropic'' core at the
droplet center and the isotropic core is energetically expensive
for low temperatures, leading to the instability with respect to higher-dimensional perturbations. 

In this paper, we focus on radial equilibria on a 3D spherical shell, bounded by two spherical surfaces with 
radial boundary conditions on both the inner and outer spherical
surfaces. Nematics in spherical shells have received much  interest since Nelson's seminal work in 2002~\cite{nelson}. 
Since then, it has been widely recognized that nematics in shells offer ample scope for generating novel non-singular and
singular nematic textures and these textures can be controlled by shell thickness, shell heterogeneity 
(concentric versus non-concentric shells), temperature and material elastic constants~\cite{ravnik1, ravnik2}. 
In some cases, these textures in 3D shells naturally exhibit defects and these defects can act as binding sites or
functionalization sites, leading to new material possibilities~\cite{ravnik1, ravnik2}.

We, firstly, prove the existence of a radial-hedgehog type solution on a 3D spherical shell with radial boundary conditions 
i.e. an explicit critical point of the Landau-de Gennes energy with perfect radial symmetry. 
In Section~\ref{sec:RH}, we provide an analytical description of this radial-hedgehog solution, by analogy with similar work 
on a 3D droplet. The radial-hedgehog solution is defined by a scalar order
parameter, $h$, which vanishes at isotropic points or defect points
\cite{sima2012,am2}. We show that the radial-hedgehog
solution has no isotropic/zero points on a 3D spherical shell,
for all temperatures below the nematic supercooling temperature. 
In other words, the radial-hedgehog solution is a defect-free equilibrium for this model problem. 
For a concentric shell without external fields, as considered in our manuscript, the stability of the radial-hedgehog
solution is controlled by temperature, shell width and material elastic constants. In the limit of vanishing
elastic constants, one can prove that minimizers of a relatively
simple Landau-de Gennes energy converge uniformly to the
radial-hedgehog solution on a 3D spherical shell with Dirichlet radial boundary conditions,
by appealing to the results in~\cite{amaz}. We work with fixed elastic constants and instead focus on the interplay between temperature and shell width in this paper.

The two key theorems in this paper are stated below. In Section~\ref{sec:global}, we focus on narrow shells
with no restriction on the temperature $t$, except for that $t \geq 0$ so that we are working with temperatures 
below the critical nematic supercooling temperature.
\begin{theor} \label{th:1}
Let $\Omega := \left\{ \xvec \in \Rr^3\colon 1 \leq |\xvec| \leq R \right\}$ and
\[
 R < \min\left\{R_0 := \exp\left( \frac{4\pi^2}{23}\right), \, R^* \right\} 
\]
where $R^*$ is defined in Proposition~\ref{prop:1}. Then the
radial-hedgehog solution is the unique global minimizer of the
Landau-de Gennes problem~\eqref{pb:Landau-dG} in the admissible
class $\mathcal{A}$ defined in~\eqref{eq:admissible} (see Section~\ref{sec:prelim}), for all
temperatures below the critical nematic supercooling temperature.
\end{theor}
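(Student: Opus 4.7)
The strategy is a quantitative perturbation argument around the critical point $\mathbf{Q}_H$, made effective by the smallness of the Poincar\'e constant on a thin shell. Let $\mathbf{Q} \in \mathcal{A}$ be any competitor: since $\mathbf{Q}$ and $\mathbf{Q}_H$ share the same radial Dirichlet data on $\partial \Omega$, the difference $\mathbf{V} := \mathbf{Q} - \mathbf{Q}_H$ lies in $H_0^1(\Omega; \mathcal{S}_0)$, where $\mathcal{S}_0$ denotes the space of traceless symmetric $Q$-tensors.

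The first step is to expand the Landau-de Gennes energy $\mathcal{F}$ around $\mathbf{Q}_H$. Since $\mathbf{Q}_H$ is a critical point, the linear term vanishes and one obtains
\[ \mathcal{F}[\mathbf{Q}_H + \mathbf{V}] - \mathcal{F}[\mathbf{Q}_H] = \mathcal{H}[\mathbf{V}] + \mathcal{N}[\mathbf{V}], \]
where
\[ \mathcal{H}[\mathbf{V}] = L \int_\Omega |\nabla \mathbf{V}|^2 \, \mathrm{d}x + \int_\Omega \mathbf{V} : \mathbb{M}(\mathbf{Q}_H) \mathbf{V} \, \mathrm{d}x \]
is the second variation at $\mathbf{Q}_H$, with $\mathbb{M}(\mathbf{Q}_H)$ the Hessian of the bulk potential at $\mathbf{Q}_H$, and $\mathcal{N}[\mathbf{V}]$ collects the cubic and quartic remainders. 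The matrix $\mathbb{M}(\mathbf{Q}_H)$ is indefinite in general, because the bulk potential contains an indefinite cubic term $-b\,\mathrm{tr}(\mathbf{Q}^3)$.

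The heart of the argument is then to establish strict coercivity of $\mathcal{H}$ when $R < R_0$. Decomposing $\mathbf{V}(x)$ in the pointwise eigenframe of $\mathbf{Q}_H(x)$, the negative eigenvalues of $\mathbb{M}(\mathbf{Q}_H)$ can be bounded explicitly in terms of $h(r)$, and $h$ is in turn controlled by geometric quantities alone (using the maximum principle noted in Section~\ref{sec:RH}). On the spherical shell, functions vanishing on both boundary spheres satisfy a Poincar\'e-type inequality
\[ \int_\Omega |\mathbf{V}|^2 \, \mathrm{d}x \le \Lambda(R) \int_\Omega |\nabla \mathbf{V}|^2 \, \mathrm{d}x, \]
where $\Lambda(R) \to 0$ as $R \downarrow 1$. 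The explicit threshold $R_0 = \exp(4\pi^2/23)$ should arise by balancing $\Lambda(R)$ against the worst negative eigenvalue of $\mathbb{M}(\mathbf{Q}_H)$: the $4\pi^2$ from the sharp first Dirichlet eigenvalue on $[0, \log R]$ after the logarithmic change of variables $r = e^s$, and the $23$ from the numerical coefficients appearing when expanding the bulk potential in the radial eigenframe.

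Finally, the higher-order term $\mathcal{N}[\mathbf{V}]$ is a polynomial of degrees three and four in $\mathbf{V}$, so $|\mathcal{N}[\mathbf{V}]| \le C(\|\mathbf{V}\|_{L^\infty} + \|\mathbf{V}\|_{L^\infty}^2) \|\mathbf{V}\|_{L^2}^2$. A uniform $L^\infty$ bound on admissible competitors, which I expect Proposition~\ref{prop:1} to furnish and from which the threshold $R^*$ emerges, then allows $\mathcal{N}$ to be absorbed into $\mathcal{H}$, yielding $\mathcal{F}[\mathbf{Q}] > \mathcal{F}[\mathbf{Q}_H]$ whenever $\mathbf{V} \not\equiv 0$. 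The main obstacle is the coercivity step: the indefinite bulk term prevents $\mathcal{H}$ from being manifestly positive, and capturing the sharp constant $R_0$ requires delicate tracking of how the negative eigenvalues of $\mathbb{M}(\mathbf{Q}_H)$ interact with the radial structure through the weighted Poincar\'e estimate, uniformly in the temperature parameter $t \geq 0$.
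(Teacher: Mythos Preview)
Your overall scheme --- expand around $\mathbf{Q}_H$, use a Poincar\'e inequality on the thin shell to make the second variation coercive, and control the remainder --- is the right spirit, and your reading of the constant $R_0=\exp(4\pi^2/23)$ is essentially correct: it comes from $\frac12\big(\pi^2/\log R + 1/4\big) > 3$. However, two of your ingredients would not work as stated.

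\medskip
\textbf{The remainder cannot be absorbed by an $L^\infty$ bound.} You propose to control $\mathcal N[\Vvec]$ via $|\mathcal N[\Vvec]| \le C(\|\Vvec\|_{L^\infty}+\|\Vvec\|_{L^\infty}^2)\|\Vvec\|_{L^2}^2$ and then absorb it using a uniform $L^\infty$ bound ``on admissible competitors''. There is no such bound: $\mathcal A$ is a $W^{1,2}$ class and $\Vvec=\Qvec-\Hvec$ can be arbitrarily large in $L^\infty$ (indeed unbounded, since $W^{1,2}\not\hookrightarrow L^\infty$ in three dimensions). Sobolev embedding only gives $L^6$, so the cubic term scales like $\|\Vvec\|_{W^{1,2}}^3$ and cannot be dominated by a quadratic coercivity margin, no matter how thin the shell. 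You have also misread Proposition~\ref{prop:1}: it does not bound competitors; it constructs a subsolution~$\eta$ for the scalar profile~$h$, and $R^*$ is the threshold guaranteeing $h\ge 2/3$ on $[1,R]$.

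\medskip
\textbf{What the paper does instead.} The paper avoids any absorption argument for $\mathcal N$. After the Hardy substitution $\Vvec=h\,\Wvec$ (which, via the ODE for $h$, converts the $t$-dependent negative quadratic term $f(h)|\Vvec|^2$ into the purely geometric $-\tfrac{6}{r^2}h^2|\Wvec|^2$, giving the uniformity in $t$ you were worried about), the energy difference becomes
\[
I[\Qvec]-I[\Hvec]=\int_\Omega h^2\Big(\tfrac12|\nabla\Wvec|^2-\tfrac{3}{r^2}|\Wvec|^2\Big)
+ h_+ h^3\Big\{\psi(\Wvec)+\tfrac{3h}{8}\big(2\tfrac{\Hvec}{h}\cdot\Wvec+|\Wvec|^2\big)^2\Big\}
+(\text{nonneg.})\,\d V.
\]
The crucial step is an \emph{algebraic} lemma: the bracketed expression $\psi(\Wvec)+\tfrac{3h}{8}(\cdots)^2$ is pointwise $\ge 0$ whenever $h\ge 2/3$, for \emph{all} $\Wvec\in S_0$ without any smallness. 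This is where $R^*$ enters. The weighted Poincar\'e inequality $\int_1^R (v')^2 r^2\,\d r\ge C_H(R)\int_1^R v^2\,\d r$ with $C_H(R)=\pi^2/\log R+1/4$ then handles the first integral alone and produces $R_0$. So the two thresholds play completely separate roles: $R^*$ for the algebraic nonnegativity of the higher-order terms, $R_0$ for the gradient-vs-zero-order balance --- not a joint coercivity/absorption balance as you envisaged.
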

In~\cite{golovaty}, Golovaty and Berlyand prove uniqueness and radial symmetry of the minimizer on a thin 2D annulus, in the Ginzburg-Landau theory.
Theorem~\ref{th:1} is an analogous result for a thin 3D shell in the Landau-de Gennes theory, with a method of proof based on the Landau-de Gennes energy itself.

In Section~\ref{sec:t}, we study the effect of the reduced
temperature, $t$, on the stability of the radial-hedgehog
solution.
Our second main result demonstrates
the global minimality of the radial-hedgehog solution in the $t\to \infty$ limit.
\begin{theor} \label{th:2}
Let $\Omega \subset \Rr^3$ be a 3D spherical shell as defined
above. There exists $\tau \geq 0$ such that, for any~$R > 1$ and any temperature $t\geq \tau$, the radial-hedgehog
is the unique global minimizer for Problem~\eqref{pb:Landau-dG}.
\end{theor}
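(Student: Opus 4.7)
I would argue by contradiction, exploiting the asymptotics $t\to\infty$. Suppose the theorem fails: there exist temperatures $t_k\to\infty$ and global minimizers $\Qvec_k \in \Acal$ with $\Qvec_k \neq \Qvec_H^{(t_k)}$, where $\Qvec_H^{(t)}$ denotes the radial-hedgehog solution constructed in Section~\ref{sec:RH}. The aim is to show that any such sequence must coincide with $\Qvec_H^{(t_k)}$ for $k$ large, contradicting the hypothesis.

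\textbf{Compactness and identification of the limit.} The energy comparison $F_{t_k}(\Qvec_k) \leq F_{t_k}(\Qvec_H^{(t_k)})$ controls both the elastic and the bulk terms: after dividing by the equilibrium scalar order parameter $s_+(t_k)$, the rescaled tensors $\Pvec_k := \Qvec_k/s_+(t_k)$ are bounded in $W^{1,2}(\Omega)$ and the rescaled potential energy tends to zero. Up to a subsequence, $\Pvec_k \rightharpoonup \Pvec_\infty$ weakly in $W^{1,2}$ and strongly in $L^2$, with $\Pvec_\infty$ taking values in the vacuum manifold $\mathcal{N}_0 := \{\nvec \otimes \nvec - \mathbf{I}/3 : \nvec \in S^2\} \cong \mathbb{RP}^2$ almost everywhere. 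A standard lower-semicontinuity argument then identifies $\Pvec_\infty$ as a minimizer of the Dirichlet energy $\int_\Omega |\grad \Pvec|^2\,\d\xvec$ among $\mathcal{N}_0$-valued $W^{1,2}$ maps satisfying the radial boundary conditions on both components of $\partial \Omega$.

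\textbf{Uniqueness of the limit and strict stability.} I would next prove that $\Pvec_\infty$ is the radial map $\Pvec_H(\xvec) := \xvec\otimes \xvec/|\xvec|^2 - \mathbf{I}/3$. Since $\Omega$ is simply connected, any $\mathcal{N}_0$-valued finite-energy map lifts to an $S^2$-valued map; the limiting problem then reduces to minimizing the Dirichlet energy over $S^2$-valued maps on $\Omega$ with boundary data $\pm\xvec/|\xvec|$ on each boundary sphere, and a comparison on radial rays combined with the sharp pointwise inequality saturated by the radial map picks out $\Pvec_H$. Once $\Pvec_\infty = \Pvec_H$ is established, the convergence $\Pvec_k\to \Pvec_H$ can be upgraded to stronger topologies by interior elliptic regularity applied to the Euler--Lagrange system. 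Since the scalar profile $h$ of $\Qvec_H^{(t)}$ is uniformly bounded away from zero on $\Omega$ (Section~\ref{sec:RH}), the Hessian of the Landau--de Gennes energy at $\Qvec_H^{(t)}$ is coercive on $\Acal$-admissible perturbations, uniformly for $t$ large; combined with the closeness of $\Qvec_k$ to $\Qvec_H^{(t_k)}$, this forces $\Qvec_k = \Qvec_H^{(t_k)}$ for $k$ large, yielding the contradiction.

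\textbf{Main obstacle.} The crux is the uniqueness step for the limiting harmonic-map problem on a shell of arbitrary thickness $R>1$. For small $R$, uniqueness is immediate (and indeed the regime is essentially covered by Theorem~\ref{th:1}), but for larger $R$ one must rule out competitors exhibiting pairs of interior point defects of opposite sign, which are topologically admissible on $\Omega$. The argument will therefore likely need a symmetrization adapted to the radial boundary data, or a ray-by-ray energy comparison specific to the $\mathbb{RP}^2$-valued setting. A secondary technical point is ensuring that the Hessian coercivity in the last step is uniform in $t$ and covers all perturbations tangent to $\Acal$; this may require a careful spectral analysis complementing the second-variation computations of Section~\ref{sec:t}.
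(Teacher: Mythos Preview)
Your approach is genuinely different from the paper's, and it carries a real gap that you yourself flag.

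\textbf{What the paper does.} The paper's proof is entirely direct and algebraic; there is no contradiction argument, no compactness, and no limiting harmonic-map problem. Starting from the exact energy expansion~\eqref{second variation V}, one writes
\[
 I[\Qvec]-I[\Hvec] = \tfrac12\,\delta^2 I[\Hvec] + \text{(cubic and quartic terms in }\Vvec\text{)}.
\]
Proposition~\ref{prop: stability ineq} gives, for $t$ large, a quantitative lower bound $\tfrac12\delta^2 I[\Hvec]\geq\int_\Omega\{\tfrac{t}{3}h^2 v_0^2 + \alpha h_+ v_0^2 + \beta h_+(v_3^2+v_4^2)\}\,\d V$ with explicit $\alpha,\beta$. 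The crucial step is then Lemma~\ref{lemma: phi positive}: a purely algebraic computation showing that a specific combination $\varphi(\Vvec)$ of the quadratic surplus, the cubic, and the quartic terms is pointwise nonnegative once $h$ is close enough to~$1$. Assembling these with~\eqref{t h+} yields $I[\Qvec]-I[\Hvec]\geq h_+\int_\Omega\varphi(\Vvec)\,\d V\geq 0$, with equality only at $\Vvec=0$. This works for \emph{every} admissible $\Qvec$, not only minimizers, and the constants are uniform in $R$ via Lemma~\ref{lemma: h}.

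\textbf{Comparison.} Your route trades algebra for soft analysis. The payoff would be conceptual generality; the cost is that you must establish uniqueness of the minimizing harmonic map into~$\mathbb{RP}^2$ on the shell with radial boundary data, for \emph{arbitrary} $R>1$. You identify this as the main obstacle, and it is a genuine one: the paper explicitly notes (see the introduction) that the analogous $2$D Ginzburg--Landau limiting problem \emph{loses} radial symmetry for wide annuli, so the $3$D $\mathbb{RP}^2$-valued statement you need is not a triviality and is not supplied by the references in the paper. Without it, your argument does not close. A secondary issue is your last step: strict positivity of $\delta^2 I[\Hvec]$ (Proposition~\ref{prop: stability large t}) gives only local uniqueness in a $W^{1,2}$-neighbourhood whose size is not obviously uniform in $t$; to force $\Qvec_k=\Hvec^{(t_k)}$ you would need convergence in a norm matching a \emph{quantitative} stability estimate, with all constants tracked uniformly in $t$ and $R$. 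The paper sidesteps both difficulties by never passing to a limit.
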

Our mathematical strategy is similar for Theorems~\ref{th:1} and~\ref{th:2}. In Theorem~\ref{th:1},
we compute an explicit sub-solution for the order parameter, $h$, that only depends on the shell width
and is independent of $t$. In particular, we can use the shell width to uniformly control the magnitude of $h$,
a property which is absent for spherical droplets. We prove the global minimality of the radial-hedgehog solution
by writing the energy of an arbitrary nematic state (in the admissible class $\mathcal{A}$ defined in~\eqref{eq:admissible})
as the sum of the second variation of the Landau-de Gennes energy about the radial-hedgehog solution and the higher-order 
cubic and quartic contributions.
We control the second variation by means of a Poincar\'e-type inequality, purely in terms of the shell width, 
and use algebraic methods to prove the non-negativity of the residual terms.

In Theorem~\ref{th:2}, we show that the temperature $t$ uniformly controls the magnitude of $h$, for all fixed shell widths
independent of $t$. We again write the energy of an arbitrary nematic state as the sum of the second variation 
of the Landau-de Gennes energy about the radial-hedgehog solution and the higher-order cubic and quartic contributions. 
We control the second variation by adapting arguments in~\cite{ignat2014}; in particular, we derive an explicit positive lower bound
for the second variation which gives us greater control on the residual cubic and quartic energy terms. 
In particular, the sum of the cubic and quartic energy terms can be negative for large $t$, 
so global minimality is not guaranteed by non-negativity of the second variation alone.
The improved lower bound for the second variation allows us to control the problematic (potentially negative) terms
in the energy expansion for sufficiently large $t$, leading to the desired conclusion above.

The radial-hedgehog solution is a defect-free radial equilibrium for this model problem. 
A rigorous analysis of defect-free equilibria is the first step in the analysis of generic nematic equilibria in shells
and from an applications perspective, radial equilibria can also act as binding sites or attractors for microparticles
with compatible boundary conditions, leading to new material possibilities. 
We prove our results with a Dirichlet radial boundary condition.
However, we expect them to be true with surface anchoring potentials too, for sufficiently large values of the anchoring strength.
Further, the global minimality of the radial-hedgehog solution on a shell may seem intuitive to some readers. 
Whilst the passage from physical intuition to mathematical proof is always worthwhile, it is important to point out 
that global minimizers of the Ginzburg-Landau energy on 2D annuli, with fixed topological degree on the boundary 
(compatible with the radial-hedgehog solution in 2D), lose radial symmetry and develop vortices/defects
for thick annuli or large annulus width, in the $\eps \to 0$ limit~\cite{berlyandvoss}. 
The $\eps \to 0$ limit mimics, to some extent, the $t \to \infty$ limit in the Landau-de Gennes theory. 
Therefore, a rigorous proof of the global minimality of the defect-free radial-hedgehog solution on a 3D shell,
in the Landau-de Gennes framework, in the $t\to\infty$ limit, excludes such possibilities.

\numberwithin{equation}{section}
\numberwithin{theor}{section}

\section{Preliminaries}
\label{sec:prelim}

We work within the Landau-de Gennes theory for nematic liquid
crystals wherein the nematic configuration is described by the
$\Qvec$-tensor order parameter~\cite{dg}. Mathematically, the $\Qvec$-tensor
is a symmetric, traceless $3\times 3$
matrix. Let $S_0$ denote the space of all symmetric, traceless
$3\times 3$ matrices defined by
\begin{equation*} \label{eq:S0}
S_0 := \left\{ \Qvec \in \mathbb{M}^{3\times3}\colon \sum_{i = 1}^3 \Qvec_{ii} = 0 \textrm{ and } \Qvec_{ij}=\Qvec_{ji}
\textrm{ for } i,j=1,2,3 \right\}.
\end{equation*}
The domain is a 3D spherical shell, with outer radius $R$ and inner radius set to unity, as shown below
\begin{equation*}
\label{eq:omega}
\Omega := \left\{ \xvec \in \Rr^3\colon 1 \leq |\xvec| \leq R \right\} \quad \textrm{where} \quad R>1.
\end{equation*}
A $\Qvec$-tensor is said to be (i) isotropic when $\Qvec=0$, (ii) uniaxial when $\Qvec$ has two degenerate non-zero eigenvalues 
and (iii) biaxial when $\Qvec$ has three distinct eigenvalues~\cite{dg,virga}. 
A uniaxial~$\Qvec$-tensor can be written in the form 
\begin{equation*}
\label{eq:uniaxial}
\Qvec_u = s \left(\nvec
\otimes \nvec - \frac{\mathbf{I}}{3} \right)
\end{equation*}
for a real-valued order parameter, $s$, and a unit-vector field $\nvec
\in S^2$ i.e. $\Qvec_u$ has three degrees of freedom whereas a
biaxial $\Qvec$-tensor uses all five degrees of freedom. In
physical terms, a uniaxial $\Qvec$-tensor corresponds to a nematic
configuration with a single distinguished direction of molecular
alignment whereas a biaxial $\Qvec$-tensor corresponds to a
configuration with two preferred directions of molecular
alignment.

We consider a simple form of the Landau-de Gennes energy given by~\cite{dg,newtonmottram}
\begin{equation*}
\label{eq:1}
I[\Qvec] := \int_{\Omega} \frac{L}{2}|\grad \Qvec|^2 + f_B\left(\Qvec\right)~\d V.
\end{equation*}
 In what follows, we assume that the elastic constant $L > 0$ is fixed once and for all, 
 since the $L\to 0$ limit has been well-studied in recent years~\cite{amaz}.
 We use Einstein summation convention throughout the paper i.e.
$\left| \grad \Qvec \right|^2 = \Qvec_{ij,k}\Qvec_{ij,k}$ and $i,j,k=1,2,3$.
The bulk potential, $f_B$, drives the nematic-isotropic phase transition and for the purposes of this paper, 
we take $f_B$ to be a quartic polynomial in the $\Qvec$-tensor invariants as shown below:
\begin{equation*}
\label{eq:2}
f_B(\Qvec) := \frac{A}{2}\tr\Qvec^2 -
\frac{B}{3}\tr\Qvec^3 +
\frac{C}{4}\left(\tr\Qvec^2 \right)^2,
\end{equation*}
$\tr\Qvec^2 = \Qvec_{ij}\Qvec_{ij}$,
$\tr\Qvec^3= \Qvec_{ij}\Qvec_{jp}\Qvec_{pi}$ and
$i,j,p=1,2,3$. We have $A = \alpha (T - T^*)$, where
$\alpha>0$ is a material-dependent constant, $T$ is the
temperature and $T^*$ is the critical nematic supercooling
temperature~\cite{ejam, newtonmottram}. We work with temperatures
$T\leq T^*$, so that $A\leq 0$, and we treat $B, \, C>0$ to be fixed
material-dependent constants.

For $A\leq 0$, a standard computation shows that $f_B$ attains its minimum
on the set of \emph{uniaxial} $\Qvec$-tensors given by~\cite{ejam}
\begin{equation}
\label{eq:3} \Qvec_{\min} := \left\{ \Qvec \in S_0 \colon
\Qvec=s_+\left(\nvec\otimes \nvec - \frac{\mathbf{I}}{3} \right)
\right\},
\end{equation} $\nvec\in S^2$ is an arbitrary unit vector and
\begin{equation*}
\label{eq:s+}
 s_+ := \frac{B + \sqrt{B^2 + 24 |A| C}}{4 C}.
 \end{equation*}
We introduce the scalings
\begin{alignat*}{3} \label{eq:4}
t     &:= \frac{27 |A| C}{B^2}; \qquad && h_+   := \frac{3 +\sqrt{9 + 8t}}{4} \\
\Lbar &:= \frac{27 C L}{2 B^2}; \qquad && \Qbar \hspace{4pt}:= \frac{1}{s_+}\sqrt{\frac{3}{2}}\Qvec \\
\xbar &:= \bar{L}^{-1/2}\xvec .
\end{alignat*}
One can easily verify that
\begin{equation*}
\label{eq:5} 
s_+ = \frac{B}{3C} h_+; \quad 2h_+^2 = 3h_+ + t.
\end{equation*}
In what follows, we refer to $t$ as the reduced temperature and always work with $t\geq 0$. The re-scaled domain is
\begin{equation}
\label{eq:omegabar}
\bar{\Omega}:=\left\{\xvec\in\Rr^3 \colon \bar{L}^{-1/2} \leq |\xbar| \leq \bar{L}^{-1/2}R \right\}.
\end{equation}
We measure the dimensionless length in units of $\bar{L}^{1/2}$, hence we can assume WLOG that~$\bar{L} = 1$ and~\eqref{eq:omegabar} is equivalent to
\begin{equation*}
\label{eq:omegabar2}
\bar{\Omega}=\left\{\xvec\in\Rr^3 \colon 1 \leq |\xbar| \leq R \right\}
\end{equation*}
where $R>1$ is the dimensionless outer radius.

We drop the \emph{bars} in what follows and all
statements are to be understood in terms of the re-scaled
variables. The re-scaled Landau-de Gennes functional is given by
\begin{equation}
\label{eq:6} I[\Qvec] = \int_{\Omega}\frac{1}{2}|\grad \Qvec|^2 + \frac{t}{8}\left[
\left(1 - |\Qvec|^2 \right)^2 + \frac{h_+}{t}\left(1 + 3 |\Qvec|^4
- 4\sqrt{6}\tr\Qvec^3\right) \right] \d V.
\end{equation}
The re-scaled bulk potential corresponds
to $f_B(\Qvec) - \min_{\Qvec\in S_0} f_B(\Qvec)$, where we have introduced an additive constant to make the bulk energy density non-negative.

We impose Dirichlet radial boundary conditions on the inner and outer radii as shown below:
\begin{equation}
\label{eq:8} \Qvec = \Qvec_b \quad \textrm{on } r=1 \textrm{ and } r=R
\end{equation}
where
\begin{equation*}
\label{eq:9} \Qvec_b := \sqrt{\frac{3}{2}}\left( \xhat \otimes
\xhat - \frac{\mathbf{I}}{3}\right).
\end{equation*}
The unit-vector, $\xhat := \frac{\mathbf{x}}{r}$ with
$r:=|\xvec|$, is the radial unit-vector. By definition, $\Qvec_b$
is perfectly uniaxial and is a minimum of the bulk potential,
i.e., it takes its values in the set defined by~\eqref{eq:3}.

We study the variational problem
\begin{equation} \label{pb:Landau-dG} \tag{LG}
 \min_{\Qvec\in\mathcal{A}} I[\Qvec] ,
\end{equation}
where $I$ is given by~\eqref{eq:6} and $\mathcal{A}$ is the
admissible class defined by
\begin{equation}
\label{eq:admissible}
\mathcal{A} := \left\{ \Qvec \in W^{1,2}\left(\Omega; \, S_0 \right)\colon \Qvec=\Qvec_b \quad \textrm{on } r=1 \textrm{ and } r=R \right\} .
\end{equation}
The corresponding Euler-Lagrange equations are given by
\begin{equation}
\label{eq:7} \Delta \Qvec_{ij} =
\frac{t}{2}\Qvec_{ij}\left(|\Qvec|^2 - 1 \right) +
\frac{h_+}{8}\left( 12 |\Qvec|^2 \Qvec_{ij} -
12\sqrt{6}\Qvec_{ip}\Qvec_{pj} + 4\sqrt{6}|\Qvec|^2 \delta_{ij}
\right).
\end{equation}
We are interested in locally stable equilibria, that is,
solutions of~\eqref{eq:7} for which the second variation of $I$ is
positive (see Subsection~\ref{subsec:local stability} and
Section~\ref{sec:t}), and global minimizers for the
problem~\eqref{pb:Landau-dG}.

\section{The Radial-Hedgehog Solution}
\label{sec:RH}

We define the radial-hedgehog solution to be a
minimizer of the Landau-de Gennes energy~\eqref{eq:6} in the class
of all radially-symmetric uniaxial $\Qvec$-tensors. This is
analogous to the definition of the radial-hedgehog solution on a
3D spherical droplet with radial boundary conditions, as
previously used in the literature~\cite{sonnet, sima2012, am2}.

We define the radial-hedgehog solution to be
\begin{equation}
\label{eq:10} \Hvec := \sqrt{\frac{3}{2}} h(r)\left( \xhat
\otimes \xhat - \frac{\mathbf{I}}{3}\right)
\end{equation}
where $h(r)$ is a minimizer of
\begin{equation}
\label{eq:11}
E[h] := \int_{1}^{R} \frac{r^2}{2}{\der{h}}^2 + 3 h^2 
+ t \, r^2 \left[ \frac{(1- h^2)^2}{8}+ \frac{h_+}{8t}\left(1 + 3 h^4 -4 h^3 \right) \right]~ \d r
\end{equation}
subject to the boundary conditions
\begin{equation}\label{eq:11bc} h(1) = h(R) = 1.\end{equation}
This is consistent with the Dirichlet conditions defined in~\eqref{eq:8}.
The admissible space for the variational problem in~\eqref{eq:11} is taken to be
\begin{equation*}
\label{eq:11b}
\mathcal{A}_h := \left\{ h \in L^2 \left([1, \, R]; \, \d r \right)\colon 
\der{h} \in L^2 \left([1, \, R]; \, r^2 \d r \right) \textrm{ and } \ h(1) = h(R)=1 \right\}.
\end{equation*}

The minimizing function, $h(r)\in \mathcal{A}_h$, is a solution of
the following second-order ordinary differential equation
\begin{equation}
\label{eq:12} 
\sder{h} + \frac{2}{r}\der{h} - \frac{6}{r^2} h = h f(h)
\end{equation}
where
\begin{equation} \label{f-alone}
 f(h) := \frac{t}{2} (h^2 - 1) + \frac{3h_+}{2}\left(h^2 - h\right) ,
\end{equation}
subject to~\eqref{eq:11bc}. 
One can check that $\Hvec$ thus defined is a solution
of the Euler-Lagrange equations in~\eqref{eq:7}, i.e. $\Hvec$
is a critical point of the Landau-de Gennes energy. In the subsequent sections, we investigate the local and global stability of $\Hvec$ as a function of the shell width, $(R-1)$, and the reduced temperature $t$.

\vspace{.25 cm}

\begin{proposition}\label{prop:1}
Define the function $\eta\colon [1, \, R] \to \mathbb{R}$ to be
\begin{equation} \label{eq:13} 
\eta(r) := \frac{1}{R^5 - 1} \left[(R^3 - 1)r^2 +
(R^2 - 1)\left(\frac{R}{r}\right)^3 \right].
\end{equation}
Then $\eta$ satisfies the following ordinary differential
equation:
\begin{equation}
\label{eq:14} \sder{\eta}  + \frac{2}{r} \der{\eta} - \frac{6}{r^2}\eta = 0
\end{equation}
subject to the boundary conditions $\eta(1)=\eta(R)=1$. There
exists a $R^* > 1$ such that
\begin{equation*}
\eta(r) \geq \frac{2}{3} \quad \textrm{for} \quad 1\leq r \leq R \leq R^*.
\label{eq:15}
\end{equation*}
\end{proposition}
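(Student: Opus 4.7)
The plan is to first verify the ODE and boundary conditions by direct computation, and then locate and estimate the interior minimum of $\eta$ to show it tends to $1$ as $R \searrow 1$.

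For the first part, equation~\eqref{eq:14} is an equi-dimensional (Euler) ODE whose characteristic polynomial $\alpha(\alpha - 1) + 2\alpha - 6 = \alpha^2 + \alpha - 6$ has roots $\alpha = 2$ and $\alpha = -3$. Hence every solution has the form $\eta(r) = A r^2 + B r^{-3}$. Imposing $\eta(1) = \eta(R) = 1$ gives the linear system
\begin{equation*}
A + B = 1, \qquad A R^2 + B R^{-3} = 1,
\end{equation*}
which solves uniquely to $A = (R^3 - 1)/(R^5 - 1)$ and $B = R^3(R^2 - 1)/(R^5 - 1)$. Substituting these back reproduces the formula in~\eqref{eq:13}, so I would present the verification in this order: write the general solution, solve for $A$ and $B$, and confirm that it matches the explicit expression.

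For the lower bound, I would exploit the simple form $\eta = A r^2 + B r^{-3}$ with $A, B > 0$ whenever $R > 1$. Since $\der{\eta}(r) = 2 A r - 3 B / r^4$, $\eta$ is strictly convex on $(0,\infty)$ (indeed $\sder{\eta} = 2A + 12B / r^5 > 0$), so it attains its minimum on $[1, R]$ either at the unique critical point $r_* = (3B/(2A))^{1/5}$ (if $r_* \in [1, R]$) or at an endpoint. At the endpoints $\eta$ equals $1$, so the relevant quantity is
\begin{equation*}
\eta(r_*) = A r_*^2 + B r_*^{-3} = \frac{5 A}{3}\, r_*^2 = \frac{5 A}{3} \left( \frac{3 B}{2 A} \right)^{2/5},
\end{equation*}
using the relation $B r_*^{-5} = 2A/3$ to eliminate $B$.

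The final step is a continuity argument. Expanding $R = 1 + \varepsilon$, one finds $A \to 3/5$ and $B \to 2/5$ as $\varepsilon \to 0^+$, hence $3B/(2A) \to 1$, $r_* \to 1$, and $\eta(r_*) \to \frac{5}{3} \cdot \frac{3}{5} \cdot 1 = 1$. Since $R \mapsto \min_{r \in [1, R]} \eta(r)$ is continuous in $R$ and has value $1$ in the limit $R \to 1^+$, there exists $R^* > 1$ such that this minimum remains above $2/3$ for all $R \in (1, R^*]$. There is no real obstacle here: the only mild nuisance is bookkeeping the case where $r_*$ falls outside $[1, R]$ (in which case $\eta \geq 1$ trivially on the whole interval, by convexity and the boundary values), so the interesting regime is precisely when $r_* \in (1, R)$ and the estimate on $\eta(r_*)$ is what one needs.
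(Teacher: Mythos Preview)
Your proof is correct and follows essentially the same route as the paper: verify that~\eqref{eq:13} solves the Euler ODE with the given boundary data, locate the interior minimum at $r_* = (3B/(2A))^{1/5}$, compute $\eta(r_*)$ explicitly, and observe that it tends to~$1$ as $R\to 1^+$. Your expression $\eta(r_*) = \tfrac{5A}{3}(3B/(2A))^{2/5}$ is algebraically equivalent to the paper's closed form, and your added remark handling the case $r_*\notin[1,R]$ is a harmless (and tidy) completeness check.
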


\begin{proof} One can check by substitution that $\eta$, as
defined in~\eqref{eq:13}, is indeed a solution of~\eqref{eq:14}
subject to $\eta(1)=\eta(R)=1$. One can compute the minimum of
$\eta$ as a function of $R$:
an elementary computation shows that
\[
 \min_{1 \leq r \leq R} \eta (r) = \frac{5\left(R^3(R^2 - 1)\right)^{2/5}\left(R^3 - 1\right)^{3/5}}{2^{2/5}3^{3/5}\left(R^5 - 1\right)} \xrightarrow[R \to 1 \ ]{} 1 ,
\]
so there exists $R^* > 1$ such that
\begin{equation*} \label{eta min}
 \eta(r) \geq \frac23 \qquad \textrm{for }1 \leq r \leq R,
\end{equation*}
when $1 < R < R^*$.
\end{proof}


\begin{proposition}
\label{prop:2} The function $\eta$, defined in~\eqref{eq:13}, is a lower bound for the function
$h\colon [1, \, R] \to \Rr$ defined in
\eqref{eq:10}--\eqref{eq:12}, i.e.,
\begin{equation*}
\label{eq:16} \frac{2}{3}\leq \eta(r) \leq h(r) \leq 1 \quad \textrm{for} \quad 1\leq
r\leq R \leq R^*.
\end{equation*}
\end{proposition}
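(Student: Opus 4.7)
The plan is to prove the chain $\tfrac{2}{3} \leq \eta \leq h \leq 1$ in three stages: first $h \leq 1$, then $h \geq 0$, and finally $h \geq \eta$. The leftmost inequality is already provided by Proposition~\ref{prop:1}. Steps~one and~two exploit the minimality of $h$ in $\mathcal{A}_h$ by producing an admissible competitor of strictly smaller energy whenever the bound fails, and step~three is a maximum-principle comparison between $h$ and $\eta$.

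For the upper bound, I would test $\tilde h := \min(h, 1) \in \mathcal{A}_h$. On $\{h > 1\}$ the gradient contribution $\tfrac{r^2}{2}(\der h)^2$, the term $3 h^2$, and the potential $\tfrac{t r^2}{8}(1 - h^2)^2$ all strictly decrease. For the remaining cubic contribution, the polynomial $g(h) := 1 + 3 h^4 - 4 h^3$ satisfies $g(1) = 0$ and $\der g(h) = 12 h^2(h - 1) \geq 0$ for $h \geq 1$, so $g(h) \geq g(\tilde h) = 0$ there. Hence $E[\tilde h] \leq E[h]$, strictly if $\{h > 1\}$ has positive measure, which forces $h \leq 1$.

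For $h \geq 0$, the naive truncation $h^+$ is not the right substitute: on $-\sqrt{2} < h < 0$ the potential $(1 - h^2)^2$ is strictly less than $1$, so cutting at $0$ can \emph{increase} the $t$-potential. The clean replacement is $\tilde h := |h|$, which is admissible and leaves the gradient, the $h^2$, and the $(1 - h^2)^2$ terms untouched (each depends on $h$ only through $h^2$). Only the cubic piece of the bulk changes, and
\[
 E[|h|] - E[h] = \frac{h_+}{2} \int_1^R r^2 \bigl( h^3 - |h|^3 \bigr) \, \d r \leq 0 ,
\]
with strict inequality as soon as $\{h < 0\}$ has positive measure. Minimality then forces $h \geq 0$.

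For $h \geq \eta$, set $w := h - \eta$ and $L u := \sder u + \tfrac{2}{r} \der u - \tfrac{6}{r^2} u$. Subtracting \eqref{eq:14} from \eqref{eq:12} gives $L w = h f(h)$. The factorisation $f(h) = \tfrac{1}{2} (h - 1) \bigl[\, t(h+1) + 3 h_+ h \,\bigr]$ shows that $f(h) \leq 0$ on $[0, 1]$, so the two preceding steps yield $L w \leq 0$ on $(1, R)$, while $w(1) = w(R) = 0$. The weak maximum principle applies because, writing $L u = \sder u + b \der u - c u$, the coefficient $c = 6/r^2$ is non-negative: a negative interior minimum $r_0$ of $w$ would give $L w(r_0) \geq -\tfrac{6}{r_0^2} w(r_0) > 0$, contradicting $L w \leq 0$. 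Hence $w \geq 0$ and $h \geq \eta$. The main obstacle is the $h \geq 0$ step: the asymmetry of the cubic in the bulk potential rules out the obvious truncation at $0$, and one must instead fold by $|\cdot|$, which is exactly what makes the third-step comparison run with the sign of $h f(h)$ under control.
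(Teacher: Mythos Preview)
Your proof is correct and the central comparison step --- showing $h \geq \eta$ via the maximum principle applied to $w = h - \eta$ (equivalently the paper's $\nu = \eta - h$) using the sign of $hf(h)$ on $[0,1]$ --- is exactly the paper's argument. The only difference is that the paper cites~\cite{ejam, sima2012} for the auxiliary bounds $0 \leq h \leq 1$, whereas you supply self-contained energy-comparison arguments (truncation at~$1$ and the reflection $h \mapsto |h|$); this is additional detail rather than a different route.
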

\begin{proof} The proof is parallel to the proof in the
two-dimensional case, presented in~\cite{golovaty}. We define the
function
\begin{equation*}
\label{eq:nu1}
\nu(r) := \eta(r) - h(r) \quad \textrm{for} \quad 1\leq r \leq R
\end{equation*}
where $\nu(1) = \nu(R) = 0$. We proceed by contradiction.
We assume that $\nu$ has a positive maximum for $r^*\in \left(1, R \right)$.
The function $\nu$ is a solution of the following second-order differential equation
\begin{equation}
\label{eq:nu2}
\sder{\nu} + \frac{2}{r} \der{\nu} - \frac{6}{r^2} \nu = - h f(h) .
\end{equation}
The function $h(r)$ satisfies the bounds, $0\leq h(r)\leq 1$; these bounds are established in~\cite{ejam, sima2012}.
Therefore, the right-hand side of~\eqref{eq:nu2} is non-negative for all $1\leq r \leq R$.
This is enough to exclude a positive interior maximum 
and hence, we deduce that
\begin{equation*}
\label{eq:nu4}
\nu(r) = \eta(r) - h(r)  \leq 0 \quad \textrm{for} \quad 1\leq r \leq R,
\end{equation*} as required. \end{proof}


\subsection{Energy Expansion}

Let $\Qvec\in \mathcal{A}$ be an arbitrary $\Qvec$-tensor in our admissible space.
Then $\Qvec$ can be written as
\begin{equation*}
\label{eq:17b} 
\Qvec = \Hvec + \Vvec
\end{equation*}
with $\Vvec \in W^{1,2}\left(\Omega; \, S_0 \right)$ and
\begin{equation*}
\label{eq:18b} \Vvec = 0 \quad \textrm{on }r=1\textrm{ and } r=R,
\end{equation*}
since $\Qvec - \Hvec = 0$ on the boundaries. 
The first step is to compute an energy expansion
for $\Qvec$ in terms of $\Hvec$ and $\Vvec$; direct computations
show that
\begin{align*}
\label{eq:19b}
& |\Qvec|^2 = h^2 + 2 \left(\Hvec\cdot\Vvec\right) +  |\Vvec|^2 \\
&|\Qvec|^4 = h^4 + 4 h^2 \left(\Hvec\cdot\Vvec \right)+ 2 h^2 |\Vvec|^2 + 4\left(\Hvec\cdot\Vvec \right)^2 +
4(\Hvec\cdot\Vvec)|\Vvec|^2 +  \left| \Vvec \right|^4 \\
&\left(1 - |\Qvec|^2 \right)^2 = \left(1 - h^2\right)^2 + 4\left(\Hvec\cdot\Vvec \right)\left(h^2 - 1\right) + \\
&\qquad \qquad \quad \ + 2 |\Vvec|^2\left(h^2 - 1 \right) + 4\left(\Hvec\cdot\Vvec \right)^2 + 4\left(\Hvec\cdot\Vvec \right)\left| \Vvec \right|^2 +
\left| \Vvec \right|^4 \\
& \tr\Qvec^3 = \frac{h^3}{\sqrt{6}} + 3\tr\left(\Hvec^2\Vvec\right) + 3\tr\left(\Hvec\Vvec^2\right) + \tr\Vvec^3 \\
&\left| \grad \Qvec \right|^2 = \left|\grad \Hvec\right|^2 +  2\left(\grad \Hvec \cdot \grad \Vvec \right) + \left|\grad \Vvec \right|^2.
\end{align*}
We note that
\begin{equation*} \label{eq:20b} 
\tr\left( \Hvec\Vvec^2\right) = \sqrt{\frac{3}{2}}h\left( r \right) 
\left[ \left(\xhat\cdot \Vvec\right)^2 - \frac{|\Vvec|^2}{3} \right].
\end{equation*}

The Landau-de Gennes energy of $\Qvec$ can then be written as
\begin{equation} \label{eq:21}
 \begin{split}
	I[\Qvec] &= I[\Hvec] + \int_{\Omega}\grad\Hvec \cdot \grad \Vvec + 
		\frac{t}{2}\left(\Hvec \cdot \Vvec\right) \left(h^2 - 1 \right)~\d V \\
	& + \frac{h_+}{8}\int_\Omega 12 h^2 \left(\Hvec\cdot \Vvec \right) - 12\sqrt{6} \tr\left(\Hvec^2\Vvec\right)~\d V \\
	& + \int_{\Omega}\frac{1}{2}|\grad \Vvec|^2 + 
		\frac{t}{8}\left(4\left(\Hvec\cdot \Vvec \right)^2 + 2|\Vvec|^2\left( h^2 - 1 \right) \right)~\d V \\
	& + \frac{h_+}{8} \int_{\Omega} 6 h^2 |\Vvec|^2 + 12 \left(\Hvec\cdot \Vvec \right)^2 - 
		12\sqrt{6}\left(\Hvec\Vvec^2\right)~\d V \\
	& +  \int_{\Omega}\frac{t}{2}\left(\Hvec\cdot \Vvec\right)|\Vvec|^2 + 
		\frac{h_+}{8}\left( 12\left(\Hvec\cdot \Vvec\right)|\Vvec|^2 - 4\sqrt{6}\tr\Vvec^3 \right)~\d V \\
	& +  \int_{\Omega} \frac{t}{8}|\Vvec|^4 + \frac{3 h_+}{8}|\Vvec|^4~\d V.
 \end{split}
\end{equation}
The sum of the first and the second integral (that is, all the linear terms in $\Vvec$)
vanishes since $\Hvec$ is a critical point of the Landau-de Gennes energy.

We use the following basis for the space $S_0$, as introduced in
\cite{ignat2014}. Let $\nvec= \xhat$ and let $\left(\nvec, \mvec,
\pvec \right)$ denote an orthonormal basis for $\Rr^3$. In terms
of spherical polar coordinates, $\left(r, \theta, \phi \right)$,
we have
\begin{align*} \label{eq:vectors}
\nvec &:= \left( \sin\theta \cos\phi, \sin\theta \sin\phi, \cos\theta \right) \\
\mvec &:= \left( \cos\theta\cos\phi, \cos\theta\sin\phi, -\sin\theta \right) \\
\pvec &:= \left(- \sin\phi, \cos\phi, 0 \right).
\end{align*}
for~$0\leq \theta \leq \pi$ and~$0\leq \phi < 2\pi$.
Following the paradigm in~\cite{ignat2014}, we define
\begin{gather*} 
  \Evec := \nvec\otimes\nvec - \frac{\mathbf{I}}{3} ; \qquad
  \Fvec := \nvec\otimes\mvec + \mvec\otimes\nvec ; \\
  \Gvec := \nvec\otimes\pvec + \pvec\otimes\nvec ; \qquad
  \Xvec := \mvec\otimes\pvec + \pvec\otimes\mvec ; \qquad 
  \Yvec := \mvec\otimes\mvec - \pvec\otimes\pvec
\end{gather*}
where $|\Evec|^2 = 2/3$ and $|\Fvec|^2 = |\Gvec|^2 = |\Xvec|^2 = |\Yvec|^2 = 2$. Then any arbitrary $\Vvec \in S_0$ can be written as
\begin{equation} \label{eq:24}
\Vvec = v_0 \Evec + v_1 \Fvec + v_2 \Gvec + v_3
\Xvec + v_4 \Yvec
\end{equation}
for functions $v_0, v_1, v_2, v_3, v_4\colon \Omega \to \Rr$ and all five functions vanish on $r=1$ and $r=R$.

The key quantities in~\eqref{eq:21} can be written in terms of
$v_0,v_1,\ldots, v_4$, so that  the energy difference, $I[\Qvec] - I[\Hvec]$, is
%
\begin{equation} 
\label{eq:main2}
\begin{split}
	I[\Qvec] &- I[\Hvec] = \int_{\Omega} \frac{1}{2}|\grad \Vvec|^2 
		+ \frac{t}{4} |\Vvec|^2 \left(h^2 - 1 \right) + \frac{t}{3} h^2 v_0^2 ~\d V  \\
	& + \int_{\Omega} \frac{h_+ v_0^2}{2} \left( 3 h^2 - 2 h \right)
		+ \frac{3 h_+}{2}\left(h^2 + 2h \right) \left(v_3^2 + v_4^2 \right) 
		+ \frac{3 h_+}{2}\left( h^2 - h \right)\left( v_1^2 + v_2^2 \right)~\d V \\
	& + \left( \frac{t}{\sqrt{6}} + \sqrt{\frac{3}{2}}h_+ \right)
		\int_{\Omega} h v_0 \left( \frac{2}{3}v_0^2 + 2 \left(v_1^2 + v_2^2 + v_3^2 + v_4^2 \right) \right)~\d V \\
	& - \frac{\sqrt{6}h_+}{2} \int_{\Omega}\frac{2}{9}v_0^3 + v_0\left(v_1^2 + v_2^2 \right) 
		+ 6 v_1 v_2 v_3 + 3 v_4 \left(v_1^2 - v_2^2 \right)- 2v_0\left(v_3^2 + v_4^2 \right)~\d V  \\ 
	& +\frac{t + 3h_+}{8} \int_{\Omega} \frac{4}{9}v_0^4
		+ 4\left(v_1^2 + v_2^2 + v_3^2 + v_4^2 \right)^2 
		+ \frac{8}{3}v_0^2\left(v_1^2 + v_2^2 + v_3^2 + v_4^2 \right)~\d V. \\
\end{split}
\end{equation}


\subsection{Local Stability}
\label{subsec:local stability}

We compute the second variation of the Landau-de Gennes energy~\eqref{eq:6} about the radial-hedgehog solution, $\Hvec$ (defined in~\eqref{eq:10}--\eqref{eq:12}).
We recall that the second variation is, by definition,
\begin{equation*}
 \delta^2 I[\Hvec] := \dfrac{\d^2}{\d s^2}_{|s = 0} \, I[\Hvec + s\Vvec]
\end{equation*}
where $\Vvec\in W^{1, 2}_0(\Omega; \, S_0)$ is a fixed perturbation
(see~\cite{ejam, sima2012} for similar computations on a 3D
droplet). By inspecting Equation~\eqref{eq:main2} and collecting all
the quadratic terms in $v_0, v_1, v_2, v_3, v_4$, it is straightforward to verify
that the second variation is given by
\begin{equation}
\label{eq:27}
\begin{split}
\delta^2 I[\Hvec] &= \int_{\Omega} |\grad \Vvec|^2
+ \frac{t}{2}|\Vvec|^2\left(h^2 - 1 \right)~\d V \\
&+ \int_{\Omega} \frac{2t}{3}h^2 v_0^2 + h_+ v_0^2
\left( 3 h^2 - 2 h \right) + 3 h_+\left(h^2 + 2h\right)
\left(v_3^2 + v_4^2 \right)~\d V \\
&+ \int_{\Omega}3 h_+\left( h^2 - h\right)\left(v_1^2 + v_2^2 \right)~\d V.
\end{split}
\end{equation}

\vspace{.25 cm}

\begin{theor}
\label{prop:3} The radial-hedgehog solution, $\Hvec$, is a locally
stable equilibrium of the Landau-de Gennes energy~\eqref{eq:6}, in
the space $\mathcal{A}$ i.e.
\begin{equation*} \label{eq:27b}
\delta^2 I[\Hvec] > 0
\end{equation*}
for all $ t\geq 0$ and
\begin{equation*} \label{eq:27c}
1< R < \min \left\{ R^*, \, 1 + \frac{\pi}{\sqrt{6}} \right\},
\end{equation*}
where $R^*$ has been defined in Proposition~\ref{prop:1}.
\end{theor}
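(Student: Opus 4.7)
The plan is to bound the second variation~\eqref{eq:27} from below by combining three ingredients: the pointwise bound $h\geq 2/3$ from Proposition~\ref{prop:2} (valid under $R<R^*$), a weighted Poincar\'e inequality on the shell (exploiting $R-1<\pi/\sqrt{6}$), and a maximum-principle estimate on $h$ extracted from its ODE~\eqref{eq:12}.

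First, I would establish the weighted Poincar\'e inequality
\[
 \int_\Omega |\grad\Vvec|^2\,\d V\ \geq\ \frac{\pi^2}{(R-1)^2}\int_\Omega |\Vvec|^2\,\d V
\]
for $\Vvec\in W^{1,2}_0(\Omega;\,S_0)$. The key identity is that the substitution $w(r)=ru(r)$ transforms the weighted 1D energy into $\int_1^R (u')^2 r^2\,\d r=\int_1^R (w')^2\,\d r$, so the standard 1D Poincar\'e inequality applied to $w$ (which vanishes at $r=1,R$) delivers the weighted constant $\pi^2/(R-1)^2$; slicing in $r$ and using $|\grad\Vvec|^2\geq(\partial_r\Vvec)^2$ gives the displayed 3D version. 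Under $R-1<\pi/\sqrt{6}$ this constant strictly exceeds $6$.

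Next, using $|\Vvec|^2=\tfrac{2}{3}v_0^2+2\sum_{i=1}^4 v_i^2$ and $t=h_+(2h_+-3)$ (equivalent to $2h_+^2=3h_++t$), I would regroup~\eqref{eq:27} as
\[
 \delta^2 I[\Hvec] = \int_\Omega |\grad\Vvec|^2 + A_0\,v_0^2 + A_1(v_1^2+v_2^2) + A_3(v_3^2+v_4^2)\,\d V,
\]
with $A_0:=\tfrac{t}{3}(3h^2-1)+h_+h(3h-2)$, $A_1:=h_+(h-1)[2h_+(1+h)-3]$, and $A_3:=A_1+9h_+h$. Since $h\in[2/3,1]$ and $h_+\geq 3/2$ for $t\geq 0$, one checks $A_0\geq 0$, and that $A_1$ and $A_3$ are both strictly increasing in $h$; their pointwise minima on $[1,R]$ are therefore attained at an interior minimum $r_0\in(1,R)$ of $h$. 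The crux is a maximum-principle argument at $r_0$: since $h'(r_0)=0$ and $h''(r_0)\geq 0$, the ODE gives $f(h(r_0))\geq -6/r_0^2\geq -6$, and a direct algebraic computation using $t=h_+(2h_+-3)$ shows the pointwise identity $|A_1(r)|=2|f(h(r))|$. Hence $|A_1(r_0)|\leq 12$, and by monotonicity $A_1\geq -12$ and $A_3\geq -12+9h_+h\geq -3$ throughout $[1,R]$. Combined with the Poincar\'e inequality, one obtains
\[
 \delta^2 I[\Hvec]\geq \int_\Omega \tfrac{2\pi^2}{3(R-1)^2}v_0^2 + \Bigl(\tfrac{2\pi^2}{(R-1)^2}-12\Bigr)(v_1^2+v_2^2) + \Bigl(\tfrac{2\pi^2}{(R-1)^2}-3\Bigr)(v_3^2+v_4^2)\,\d V,
\]
and every coefficient is strictly positive under $R-1<\pi/\sqrt{6}$, so $\delta^2 I[\Hvec]>0$ for every nonzero $\Vvec$.

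The main obstacle is the coefficient $A_1$: under the bound $h\geq 2/3$ alone, its magnitude grows like $h_+^2(1-h)$ and is unbounded in $t$, so a fixed Poincar\'e constant cannot absorb it. The identity $|A_1|=2|f(h)|$ combined with the maximum-principle bound $|f(h(r_0))|\leq 6/r_0^2$ is exactly the tool needed: it trades a factor $(1-h)$ for a factor $1/h_+^2$, producing the universal bound $|A_1|\leq 12$ that precisely matches the critical Poincar\'e constant at the threshold $R-1=\pi/\sqrt{6}$.
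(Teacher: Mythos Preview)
Your argument is correct and reaches the same threshold $(R-1)^2<\pi^2/6$, but the route is genuinely different from the paper's.

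The paper first discards the manifestly non-negative quadratic terms in~\eqref{eq:27} to obtain the crude lower bound
\[
 \delta^2 I[\Hvec]\ \geq\ \int_\Omega |\grad\Vvec|^2 + f(h)\,|\Vvec|^2\,\d V,
\]
and then performs a \emph{Hardy substitution} $\Vvec = h\,\Vbar$. Integration by parts together with the ODE~\eqref{eq:12} for $h$ collapses this to
\[
 \delta^2 I[\Hvec]\ \geq\ \int h^2\!\left(r^2|\grad\Vbar|^2 - 6|\Vbar|^2\right)\sin\theta\,\d r\,\d\theta\,\d\phi,
\]
after which Wirtinger's inequality on $\Vbar$ gives positivity. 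In other words, the paper uses the ODE \emph{globally}, through the identity $hf(h)=\sder{h}+\tfrac{2}{r}\der{h}-\tfrac{6}{r^2}h$, to cancel the dangerous $f(h)|\Vvec|^2$ term against cross terms produced by the substitution.

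You instead keep the full mode-by-mode decomposition and apply Poincar\'e directly to $\Vvec$. Your key step is to use the ODE \emph{locally}: at the interior minimum $r_0$ of $h$ you read off $f(h(r_0))\geq -6/r_0^2\geq -6$ from~\eqref{eq:12}, and the identity $A_1=2f(h)$ plus monotonicity in $h$ propagates this to the universal bound $A_1\geq -12$ on the whole interval. This is a nice maximum-principle observation that the paper does not make explicit; it explains transparently why the bound is uniform in $t$ despite $A_1$ containing a factor $h_+^2(1-h)$.

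Both approaches exploit the same underlying mechanism---the ODE relates the size of $f(h)$ to the geometric term $6/r^2$---but the paper packages it as a Hardy identity while you package it as a pointwise maximum-principle estimate. One advantage of your route is that the final Poincar\'e step is applied to $\Vvec$ with the clean weight $r^2$ (via the substitution $w=ru$), whereas the paper's final Wirtinger step carries the extra weight $h^2(r)$, which is glossed over in the text. On the other hand, the paper's Hardy trick is reused verbatim in Section~\ref{sec:t} (Lemma~\ref{lemma: hardy}), so it has the virtue of unifying the two regimes.
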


\vspace{.5 cm}

\begin{proof} The proof follows from a Hardy-type
trick. We start with the integral expression~\eqref{eq:27}. We
recall from Proposition~\ref{prop:2} that for $R < R^*$, we have
\begin{equation*} \label{eq:27d}
\frac{2}{3} \leq h(r) \leq 1 \quad \textrm{for any } 1\leq r \leq R
\end{equation*}
so that $3 h^2 - 2 h \geq 0$ for $r\in\left[1, \, R \right]$.
Therefore, there are two problematic non-positive terms above in
\eqref{eq:27}: $\frac{t}{2}|\Vvec|^2\left(h^2 - 1 \right)$ and $3 h_+\left( h^2 - h\right)\left(v_1^2 + v_2^2 \right)$ . We combine the two
non-positive terms so that the second variation is bounded from below by
\begin{equation}
\label{eq:b1}
\delta^2 I \geq \int_{\Omega} |\grad \Vvec|^2 + f(h) |\Vvec|^2~\d V
\end{equation}
(the function~$f(h)$ has been defined in Equation~\eqref{f-alone}).
An arbitrary $\Vvec$ can be written as
\begin{equation*}
\label{eq:b2}
\Vvec(\xvec) = h(r) \Vbar (\xvec)
\end{equation*}
where $\Vbar$ vanishes on $r=1$ and $r=R$, since
$h$ is strictly positive for $1\leq r \leq R$. We use integration by parts to obtain
\begin{equation}
\label{eq:b4}
\int_{\Omega} |\grad \Vvec|^2~\d V =
\int_{0}^{2\pi}\int_{0}^{\pi}\int_{1}^{R}\left(h^2 r^2 |\grad
\Vbar|^2 - |\Vbar|^2 h \sder{h} r^2  - 2 h \der{h} r |\Vbar|^2 \right) \sin\theta \, \d r \, \d\theta \, \d\phi.
\end{equation}
Recalling the ordinary differential equation for $h(r)$ in~\eqref{eq:12}, we see that
\begin{equation}
\begin{split} \label{eq:b5} 
\int_{\Omega} f(h) |\Vvec|^2~\d V =
\int_{0}^{2\pi} \int_{0}^{\pi}\int_{1}^{R}&\left(\sder{h} + 
\frac{2}{r}\der{h} - \frac{6}{r^2} h \right) h |\Vbar|^2 r^2 \sin\theta \, \d r \, \d\theta \, \d\phi.
\end{split}
\end{equation}
Combining~\eqref{eq:b1},~\eqref{eq:b4} and~\eqref{eq:b5}, we
obtain
\begin{equation}
\label{eq:b6} \delta^2 I \geq
\int_{0}^{2\pi}\int_{0}^{\pi}\int_{1}^{R} h^2(r)\left( r^2 |\grad
\Vbar|^2 - 6 |\Vbar|^2 \right)\sin\theta \, \d r \, \d\theta \, \d\phi.
\end{equation}
We now use $r\geq 1$ and Wirtinger's inequality~\cite{evans}
\[
\int_{1}^{R} {\der{v}}^2 \, \d r \geq \frac{\pi^2}{(R-1)^2} \int_{1}^{R} v^2 \,\d r
\]
for any function
$v\colon [1, \, R] \to \Rr$ such that $v(1) = v(R)=0$, to see that $\delta^2 I > 0$ for
\begin{equation*}
\label{eq:b7}
\left( R - 1 \right)^2 < \frac{\pi^2}{6}. \tag*{\qedhere}
\end{equation*}
\end{proof}

\section{On the Minimality of the Hedgehog when \texorpdfstring{$R - 1$}{R - 1} is Small}%
\label{sec:global}

%

This section is devoted to the proof of Theorem~\ref{th:1}, i.e., we assume that $R$ satisfies 
\begin{equation} \label{hp: R small}
 R < \min\left\{R_0 := \exp\left( \frac{4\pi^2}{23}\right), R^* \right\}
\end{equation}
where~$R^*$ is defined in Proposition~\ref{prop:1},
and prove that the radial-hedgehog is energy minimizing.
As a preliminary remark, we point out that the smallness assumption~\eqref{hp: R small} on $R - 1$ and Proposition~\ref{prop:1} imply
\begin{equation} \label{hp: h large}
h(r) \geq \frac23 \quad \textrm{for} \quad 1 \leq r \leq R .
\end{equation}

Take an admissible field $\Qvec \in \mathcal{A}$ and set $\Vvec :=
\Qvec - \Hvec \in W^{1, 2}_0(\Omega; \, S_0)$. The functions $v_0$, $v_1$\ldots, $v_4$, are the coordinates of $\Vvec$ with
respect to the basis $\Evec$, $\Fvec$, $\Gvec$, $\Xvec$, $\Yvec$:
\[
 \Vvec = v_0 \Evec + v_1 \Fvec + v_2 \Gvec + v_3 \Xvec + v_4 \Yvec .
\]
We have an expression for the energy difference $I[\Qvec]- I[\Hvec]$, namely, Equation~\eqref{eq:21}:
\[
\begin{split}
 I[\Qvec] - I[\Hvec] &= \int_\Omega \bigg\{ \frac12 \abs{\grad \Vvec}^2 + \frac t4 \abs{\Vvec}^2 (h^2 - 1) + \frac t2 (\Hvec\cdot\Vvec)^2 \\
               &\qquad +\frac{h_+}{8} \left(6h^2 \abs{\Vvec}^2 + 12 (\Hvec\cdot\Vvec)^2 - 12\sqrt{6} \left(\Hvec\Vvec^2\right)\right) \\
               &\qquad + \frac t2 (\Hvec\cdot\Vvec) \abs{\Vvec}^2 + \frac{h_+}{8} \left(12(\Hvec\cdot\Vvec)\abs{\Vvec}^2 - 4\sqrt{6} \tr\Vvec^3\right) \\
               &\qquad + \frac t8 \abs{\Vvec}^4 + \frac{3h_+}{8} \abs{\Vvec}^4 \bigg\}~\d V.
\end{split}
\]
A direct computation shows that
\[
\begin{split}
 - 12\sqrt{6} \tr\left(\Hvec\Vvec^2\right) &=
-4h \left( v_0^2 - 9 v_3^2 - 9 v_4^2 \right) - 6h \abs{\Vvec}^2 ,
\end{split}
\]
so
\begin{equation} \label{second variation V}
\begin{split}
 I[\Qvec] - I[\Hvec] &= \int_\Omega \bigg\{ \frac12 \abs{\grad \Vvec}^2 + \frac t4 \abs{\Vvec}^2 (h^2 - 1) + \frac t2 (\Hvec\cdot\Vvec)^2 \\
               &\qquad +\frac{3h_+}{4}\abs{\Vvec}^2 (h^2 - h)  + \frac{h_+}{8} \left(12 (\Hvec\cdot\Vvec)^2 -4h \left( v_0^2 - 9 v_3^2 - 9 v_4^2 \right)\right) \\
               &\qquad + \frac t2 (\Hvec\cdot\Vvec) \abs{\Vvec}^2 + \frac{h_+}{8} \left(12(\Hvec\cdot\Vvec)\abs{\Vvec}^2 - 4\sqrt{6} \tr\Vvec^3\right) \\
               &\qquad + \frac t8 \abs{\Vvec}^4 + \frac{3h_+}{8} \abs{\Vvec}^4 \bigg\}~\d V \\
               &= \int_\Omega \bigg\{ \frac12 \abs{\grad \Vvec}^2 + f(h) \abs{\Vvec}^2 
               + h_+ \left(-\frac{\sqrt{6}}{2} \tr\Vvec^3 + \frac h2 \left( - v_0^2 + 9 v_3^2 + 9 v_4^2 \right)\right) \\
               &\qquad + \frac{t + 3h_+}{8} \left(2(\Hvec\cdot\Vvec) + \abs{\Vvec}^2\right)^2  \bigg\}~\d V .
\end{split}
\end{equation}
To deal with the first two terms, we write $\Vvec= h\Wvec$, $v_i = h w_i$ and use the Hardy
decomposition trick again. With computations similar to
\eqref{eq:b5}--\eqref{eq:b6}, we obtain
\begin{equation} \label{energy difference}
\begin{split}
 I[\Qvec] - I[\Hvec]
               &= \int_{\Omega} \bigg\{ h^2 \left(\frac{1}{2} \abs{\grad \Wvec}^2 - \frac {3}{r^2} \abs{\Wvec}^2\right) + h_+  h^3 \psi(\Wvec)\\
               &\qquad + \frac{t + 3h_+}{8}h^4\left(2\left(\frac{\Hvec}{h}\cdot\Wvec\right) + \abs{\Wvec}^2\right)^2 \bigg\}~\d V
\end{split}
\end{equation}
where
\begin{equation} \label{psi}
\begin{split}
 \psi(\Wvec) &:= -\frac{\sqrt{6}}{2} \tr\Wvec^3 - \frac 12 w_0^2 + \frac 92 w_3^2 + \frac 92 w_4^2 \\
          &= -\frac12 w_0^2 + \frac92 \left(w_3^2 + w_4^2 \right) + \sqrt 6 w_0 \left(w_3^2 + w_4^2\right) \\
          &\qquad + \frac{3\sqrt 6}{2} w_4\left(w_2^2 - w_1^2\right) - 3\sqrt 6 w_1w_2w_3 - \frac{\sqrt 6}{2} w_0 \left(w_1^2 + w_2^2\right) - \frac{\sqrt 6}{9} w_0^3 .
\end{split}
\end{equation}

In order to prove Theorem~\ref{th:1}, we need to show
$I[\Qvec] - I[\Hvec]\geq 0$ for any admissible $\Qvec$, with
equality if and only if $\Qvec=\Hvec$. In the following lemmas, we
prove that $\psi(\Wvec)$ is non-negative. 
We then use a Poincar\'e-type inequality to demonstrate the positivity of the bracketed integral, 
$\int_{\Omega} h^2 \left(\frac{1}{2} \abs{\grad \Wvec}^2 - \frac {3}{r^2} \abs{\Wvec}^2\right)~\d V$, above for small $R - 1$ .
This completes the proof of the theorem.

\begin{lemma} \label{lemma: lower bound psi}
 Let $\psi$ be defined by Formula~\eqref{psi}, we have
 \[
  \psi(w_0, \, w_1, \, w_2, \, \, w_3, \, w_4) \geq \psi\left(w_0, \, \sqrt{w_1^2 + w_2^2}, \, 0, \, 0, \, \sqrt{w_3^2 + w_4^3} \right)
 \]
 for all $(w_0, \, w_1, \, w_2, \, w_3, \, w_4)\in\Rr^5$.
\end{lemma}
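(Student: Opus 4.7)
The approach is purely algebraic: compute the difference
\[
\Delta := \psi(w_0, w_1, w_2, w_3, w_4) - \psi\bigl(w_0, \sqrt{w_1^2 + w_2^2},\, 0,\, 0,\, \sqrt{w_3^2 + w_4^2}\bigr)
\]
and show $\Delta \geq 0$. Looking at formula \eqref{psi}, I first observe that \emph{most} summands are invariant under the substitution. The three monomials in $w_0$ alone, together with $\tfrac{9}{2}(w_3^2 + w_4^2)$, $\sqrt{6}\,w_0(w_3^2 + w_4^2)$ and $-\tfrac{\sqrt{6}}{2} w_0(w_1^2 + w_2^2)$, depend only on the rotation-invariant combinations $w_1^2 + w_2^2$ and $w_3^2 + w_4^2$, which are preserved. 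Hence the only contributions to $\Delta$ come from the two cubic cross-terms $\tfrac{3\sqrt{6}}{2}\, w_4(w_2^2 - w_1^2)$ and $-3\sqrt{6}\, w_1 w_2 w_3$.

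Setting $b := \sqrt{w_3^2 + w_4^2}$ for brevity, writing out the substitution $w_1 \mapsto \sqrt{w_1^2+w_2^2}$, $w_2 \mapsto 0$, $w_3 \mapsto 0$, $w_4 \mapsto b$ in those two terms and subtracting gives, after a short rearrangement,
\[
\Delta \;=\; \frac{3\sqrt{6}}{2}\Bigl[ w_2^2(b + w_4) + w_1^2(b - w_4) \Bigr] - 3\sqrt{6}\, w_1 w_2 w_3 .
\]
Two features of this expression are crucial, and they are what makes the inequality work. First, $b \geq |w_4|$ by definition of $b$, so the factors $b \pm w_4$ are both non-negative. Second, $(b + w_4)(b - w_4) = b^2 - w_4^2 = w_3^2$.

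These two identities set up a one-line AM--GM argument: applying the elementary inequality $x + y \geq 2\sqrt{xy}$ with $x := w_2^2(b+w_4)$ and $y := w_1^2(b-w_4)$ gives
\[
w_2^2(b+w_4) + w_1^2(b-w_4) \;\geq\; 2\,|w_1 w_2|\,\sqrt{(b+w_4)(b-w_4)} \;=\; 2\,|w_1 w_2|\,|w_3| \;\geq\; 2\, w_1 w_2 w_3 ,
\]
from which $\Delta \geq 0$ follows immediately. I do not expect any genuine obstacle here; the only point requiring care is to verify that all the terms of $\psi$ other than the two cubic cross-terms really are invariant under the stated substitution, which is a direct coordinate-by-coordinate check.
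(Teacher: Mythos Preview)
Your proof is correct. Both you and the paper reduce the lemma to the same core inequality
\[
\frac{3\sqrt{6}}{2}\,w_4(w_2^2 - w_1^2) - 3\sqrt{6}\,w_1 w_2 w_3 \;\geq\; -\frac{3\sqrt{6}}{2}\sqrt{w_3^2+w_4^2}\,(w_1^2+w_2^2),
\]
after observing that all other summands in~$\psi$ are invariant under the substitution. The difference lies in how this inequality is established. The paper introduces Hopf-type coordinates $w_1 = \rho\cos\theta\cos\varphi_1$, $w_2 = \rho\cos\theta\sin\varphi_1$, $w_3 = \rho\sin\theta\cos\varphi_2$, $w_4 = \rho\sin\theta\sin\varphi_2$, under which the left-hand side collapses to $-\tfrac{3\sqrt{6}}{2}\rho^3\sin\theta\cos^2\theta\,\sin(2\varphi_1+\varphi_2)$, and the inequality follows from $\sin(\cdot)\leq 1$. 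Your route is purely algebraic: you rewrite the difference as $\tfrac{3\sqrt{6}}{2}\bigl[w_2^2(b+w_4)+w_1^2(b-w_4)\bigr] - 3\sqrt{6}\,w_1w_2w_3$ with $b=\sqrt{w_3^2+w_4^2}$, then apply AM--GM using the key identity $(b+w_4)(b-w_4)=w_3^2$. Your argument is slightly more elementary in that it avoids the parametrisation altogether; the paper's version, on the other hand, makes the underlying rotational structure (in the $(w_1,w_2)$- and $(w_3,w_4)$-planes) transparent and explains \emph{why} only the combinations $w_1^2+w_2^2$ and $w_3^2+w_4^2$ survive.
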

\begin{proof} Thanks to~\eqref{psi}, the lemma reduces to proving that 
\begin{equation} \label{ineq psi}
 \frac{3\sqrt 6}{2} w_4\left(w_2^2 - w_1^2\right) - 3\sqrt 6 w_1w_2w_3 \geq - \frac{3\sqrt 6}{2} \sqrt{w_3^2 + w_4^2}\left(w_1^2 + w_2^2\right) .
\end{equation}
Let us consider the change of variables given by
\[
 w_1 = \rho \cos\theta \cos\varphi_1 \qquad w_2 = \rho \cos\theta \sin\varphi_1 \qquad
 w_3 = \rho \sin\theta \cos\varphi_2 \qquad w_4 = \rho \sin\theta \sin\varphi_2 ,
\]
where
\[
 \rho > 0, \, \qquad 0 \leq \theta \leq \frac{\pi}{2}, \qquad 0 \leq \varphi_1, \, \varphi_2 < 2\pi .
\]
Firstly, we remark that this formula defines an admissible change
of variable, in the sense that $(\rho, \, \theta, \, \varphi_1, \,\varphi_2) \mapsto (w_1, \, w_2, \, w_3, \, w_4)$ gives a
one-to-one and onto mapping $(0, \, +\infty) \times [0, \, \pi/2] \times [0, \, 2\pi)^2 \to \Rr^4 \setminus \{0\}$. 
Secondly, we write the left hand side of~\eqref{ineq psi} in terms of the new variables and obtain
\[
\begin{split}
 \frac{3\sqrt 6}{2} w_4\left(w_2^2 - w_1^2\right) &- 3\sqrt 6 w_1w_2w_3 \\
 &= -\frac{3 \sqrt 6}{2} \rho^3 \sin\theta \cos^2\theta \left(\cos(2\varphi_1)\sin\varphi_2 + \sin(2\varphi_1)\cos\varphi_2 \right) \\
 &= -\frac{3 \sqrt 6}{2} \rho^3 \sin\theta \cos^2\theta \sin\left(2\varphi_1 + \varphi_2\right) \\
 &\geq -\frac{3 \sqrt 6}{2} \rho^3 \sin\theta \cos^2\theta ,
\end{split}
\]
which is precisely the right hand side of~\eqref{ineq psi}.
\end{proof}

\begin{lemma} \label{lemma: psi positive}
 If~\eqref{hp: h large} holds, then
 \[
  \psi(\Wvec) + \frac{3h}{8} \left(2\frac{\Hvec}{h}\cdot\Wvec + \abs{\Wvec}^2\right)^2 \geq 0 .
 \]
\end{lemma}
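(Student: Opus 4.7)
My plan is to reduce the inequality, via Lemma~\ref{lemma: lower bound psi}, to an explicit polynomial inequality in three non-negative variables, and then to verify that inequality by combining a discriminant argument with a careful grouping of quartic and cubic terms. The hypothesis~\eqref{hp: h large} enters at one sharp step.

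The first step is to apply Lemma~\ref{lemma: lower bound psi} to reduce to the case $w_2 = w_3 = 0$, $w_1, w_4 \geq 0$. This is legitimate because the positive term $(3h/8)A^2$ is invariant under the substitution: $\Hvec/h = \sqrt{3/2}\,\Evec$, so $(\Hvec/h)\cdot\Wvec = \sqrt{2/3}\,w_0$, while $\abs{\Wvec}^2 = (2/3)w_0^2 + 2(w_1^2 + w_2^2 + w_3^2 + w_4^2)$. Thus $A := 2(\Hvec/h)\cdot\Wvec + \abs{\Wvec}^2$ depends on $\Wvec$ only through $w_0$ and $\abs{\Wvec}^2$, both of which are preserved by the substitution. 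In the reduced variables, I would expand $P(w_0, w_1, w_4) := \psi + \tfrac{3h}{8} A^2$ as a polynomial. A useful simplification is the identity $w_0^2 + 3(w_1^2 + w_4^2) = (3/2)\abs{\Wvec}^2$, which shows that all quartic contributions in $P$ recombine as $(3h/8)\abs{\Wvec}^4$.

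The core of the proof is to decompose $P$ as a sum of manifestly non-negative pieces. The purely $w_0$-dependent part
\[
 \Psi_1(w_0) := (h - \tfrac{1}{2})w_0^2 + \tfrac{\sqrt{6}(3h-1)}{9}w_0^3 + \tfrac{h}{6}w_0^4
\]
is non-negative precisely when $h \geq 2/3$: factoring $w_0^2$ leaves a quadratic in $w_0$ whose discriminant equals $(2 - 3h)/27 \leq 0$ under~\eqref{hp: h large}. This is the single sharp use of the hypothesis. Similarly, the dangerous cubic $-\tfrac{3\sqrt{6}}{2}w_4 w_1^2$ pairs with $\tfrac{9}{2}w_4^2$ and a fraction of $\tfrac{3h}{2}(w_1^2 + w_4^2)^2$ to form a quadratic in $w_4$ with discriminant $(27/2)(1 - 2h)w_1^4 \leq 0$ for $h \geq 1/2$.

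The main obstacle will be disposing of the remaining cross terms $\sqrt{6}(h - 1/2)w_0 w_1^2 + \sqrt{6}(1+h)w_0 w_4^2$, using the surviving positive pieces $h w_0^2(w_1^2+w_4^2)$, $3h w_1^2 w_4^2$, and $\tfrac{3h}{2}w_4^4$. My strategy is to view the residual as a quadratic in $w_0$, complete the square to produce a manifestly non-negative expression plus a residue proportional to $-w_1^2$ and $-w_4^2$, and then dominate this residue by fragments of the remaining quartics. Because the $(w_1, w_4)$-block above saturates its discriminant at $h = 1/2$, and the $w_4^2$-residue scales like $(1+h)^2/h$, the bookkeeping has to be done carefully; this is where the quantitative strength of $h \geq 2/3$ (rather than merely $h \geq 1/2$) is needed to close the argument.
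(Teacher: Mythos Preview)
Your reduction via Lemma~\ref{lemma: lower bound psi} and your analysis of the purely-$w_0$ block $\Psi_1$ are correct (and the observation that its discriminant is exactly $(2-3h)/27$, so that $h\ge 2/3$ is sharp there, is nice). However, the final step does not close as written.

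After removing $\Psi_1$ and the $(w_1,w_4)$-block with the allocation your stated discriminant forces (all of $\tfrac{9}{2}w_4^2$ and all of $\tfrac{3h}{2}w_1^4$), the remainder is
\[
 R \;=\; h\,w_0^2\,(w_1^2+w_4^2) \;+\; \sqrt{6}(1+h)\,w_0 w_4^2 \;+\; \sqrt{6}\bigl(h-\tfrac12\bigr)\,w_0 w_1^2 \;+\; 3h\,w_1^2 w_4^2 \;+\; \tfrac{3h}{2}\,w_4^4 .
\]
Completing the square in $w_0$ produces a residue $-\dfrac{3}{2h}\,\dfrac{[(1+h)w_4^2+(h-\tfrac12)w_1^2]^2}{w_1^2+w_4^2}$, which is homogeneous of degree~$2$ in $(w_1,w_4)$. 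The ``remaining quartics'' $3h\,w_1^2w_4^2+\tfrac{3h}{2}w_4^4$ are degree~$4$ and cannot dominate a negative degree-$2$ quantity for small $(w_1,w_4)$. Concretely, at $h=\tfrac23$, $w_1=0$, $w_0=-\tfrac{5\sqrt6}{4}$ one finds $R = w_4^2\bigl(w_4^2-\tfrac{25}{4}\bigr)<0$ for $0<w_4<\tfrac52$. Since at $h=\tfrac23$ both $\Psi_1$ and the $(w_1,w_4)$-block are \emph{sharp} (zero discriminant), no slack can be borrowed; your decomposition proves only $P\ge 0+0+R$, which is false. The cubic $\sqrt6(1+h)w_0w_4^2$ simply cannot be absorbed without retaining part of the degree-$2$ term $\tfrac{9}{2}w_4^2$ or part of $\Psi_1$, and you have spent both in full.

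The paper avoids this coupling altogether. After the same reduction to $w_2=w_3=0$, it introduces $X:=\sqrt{2/3}\,(w_0+3w_4)$ and $\epsilon:=2(\Hvec/h)\cdot\Wvec+|\Wvec|^2$, and observes the remarkable identity $\psi(\Wvec)=\tfrac14\,(X^3+3X^2-3\epsilon X)$. For fixed $\epsilon\ge -1$ the admissible range of $X$ is an explicit interval, and minimizing the cubic over that interval gives $\psi\ge \tfrac34\epsilon+\tfrac12-\tfrac12(\epsilon+1)^{3/2}$. Adding $\tfrac{3h}{8}\epsilon^2\ge\tfrac14\epsilon^2$ reduces everything to showing a single explicit function $G(\epsilon)\ge0$ on $[-1,\infty)$, which is elementary calculus. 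The change of variables is the key idea your block decomposition is missing.
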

\begin{proof} It is convenient to express the function $\psi$ in terms of
a new set of variables for the proof of this lemma. From Lemma~\ref{lemma: lower bound psi}, we can assume WLOG that $w_2 = w_3 = 0$.

Let
\begin{equation} \label{X}
X := \sqrt{\frac23} (w_0 + 3w_4)
\end{equation}
and
\begin{equation} \label{epsilon}
\epsilon := 2\frac{\Hvec}{h}\cdot\Wvec + \abs{\Wvec}^2 = \frac23 w_0^2 +
2\sqrt{\frac23}w_0 + 2w_1^2 + 2w_4^2 .
\end{equation}
Substituting~\eqref{X} and~\eqref{epsilon} into the right hand side of~\eqref{psi}, we obtain
\begin{equation} \label{psi X}
 \psi(\Wvec) = \frac14 \left( X^3 + 3 X^2 - 3\epsilon X\right) .
\end{equation}
Thus, $\psi$ reduces to a polynomial of degree three in the variables~$X$ and~$\epsilon$.

Our goal is to minimize~$\psi$ and we need to demarcate the
relevant ranges for the variables~$X$ and~$\epsilon$. Note that
\[
 \epsilon = \abs{\frac{\Hvec}{h} + \Wvec}^2 - 1 = \abs{\frac{\Qvec}{h}}^2 -1 \geq -1
\]
by definition. We can deduce the following inequality from Equation~\eqref{epsilon}:
\begin{equation}\label{ellipse}
 \frac23 \left(w_0 + \sqrt{\frac 32}\right)^2 + 2w_4^2 \leq 1 + \epsilon ,
\end{equation}
from which it is clear that~\eqref{ellipse} represents a region
bounded by an ellipse in the $(w_0, \, w_4)$-plane.
We denote that
region by $\Sigma$. Then, $X$ can take any value between the
minimum and the maximum of the function $F\colon (w_0, \, w_4)
\mapsto \sqrt{2/3}(w_0 + 3w_4)$ over $\Sigma$. By the Lagrange
multiplier theorem, at the extrema the tangent lines to the
ellipse $\partial \Sigma$ have equation $\sqrt{2/3}(w_0 + 3w_4) =
c$. Thus, the minimum and the maximum value of $F$ over $\Sigma$
are exactly the values of $c$ for which the line $\sqrt{2/3}(w_0 +
3w_4) = c$ is tangent to $\partial \Sigma$. These values can be
computed, e.g., by forcing the system for $(w_0, \, w_4)$
\[
 \begin{cases}
  \dfrac23 w_0^2 + 2\sqrt{\dfrac23}w_0 + 2w_4^2 = \epsilon \\
  \sqrt{\dfrac 23}(w_0 + 3w_4) = c
 \end{cases}
\]
to have a unique solution. We manipulate the two relations above to 
conclude that
\begin{equation} \label{X range}
 -1 - 2\sqrt{\epsilon + 1} \leq X \leq -1 + 2 \sqrt{\epsilon + 1} .
\end{equation}

Next, we minimize the right hand side of~\eqref{psi X}, as a
function of $X$, in the range~\eqref{X range}. We obtain
\[
 \psi(\Wvec) \geq \psi\left(- 1+ \sqrt{\epsilon + 1}\right) = \frac34 \epsilon + \frac 12 - \frac 12 (\epsilon + 1)^{3/2} ,
\]
hence, if the condition~\eqref{hp: h large} is satisfied 
\[
 \psi(\Wvec) + \frac{3h}{8} \left( 2\frac{\Hvec}{h}\cdot\Wvec + \abs{\Wvec}^2 \right)^2 \geq \frac14 \epsilon^2 + \frac34 \epsilon + \frac 12 - \frac 12 (\epsilon + 1)^{3/2} =: G(\epsilon).
\]
Finally, we need to show that the function $G$ is non negative on
$[-1, \, +\infty)$. A standard analysis shows that $G$ has a global
minimum on $[-1, \, +\infty)$, which is either $\epsilon = -1$ or
an interior critical point. Now, $G(-1) = 0$, and there are two
critical points for $G$: $\epsilon = -3/4$ (which is a local
maximum) and $\epsilon = 0$. Therefore, $G(\epsilon) \geq 0$ for
every $\epsilon \geq -1$.
\end{proof}

\begin{lemma} \label{lemma: hardy ineq}
For all $R > 1$ there exists a (optimal) constant $C_H(R) > 0$
such that, for all $v\in H^1_0(1, \, R)$, we have
\[
  \int_1^R {\der{v}}^2 r^2 \, \d r \geq C_H(R) \int_1^R v^2 \, \d r .
\]
Moreover, $C_H(R) > 1/4$ for all $R > 1$.
\end{lemma}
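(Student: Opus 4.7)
The plan is to reduce the weighted inequality to a classical Hardy problem via the substitution $w(r) := r\, v(r)$. Since $v(1) = v(R) = 0$, we have $w \in H^1_0(1, R)$; expanding $\der{w} = v + r\der{v}$ and integrating by parts one finds that $2\int_1^R r v \der{v}\, \d r = [r v^2]_1^R - \int_1^R v^2 \, \d r = -\int_1^R v^2 \, \d r$, which exactly cancels the $v^2$ term obtained by squaring $\der{w}$. Consequently
\[
\int_1^R {\der{v}}^2 r^2 \, \d r = \int_1^R {\der{w}}^2 \, \d r, \qquad \int_1^R v^2 \, \d r = \int_1^R \frac{w^2}{r^2} \, \d r,
\]
so the lemma becomes the weighted Hardy-type inequality
\[
\int_1^R {\der{w}}^2 \, \d r \geq C_H(R) \int_1^R \frac{w^2}{r^2} \, \d r, \qquad w \in H^1_0(1, R),
\]
with the extra requirement $C_H(R) > 1/4$.

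Next, I would identify $C_H(R)$ as the smallest eigenvalue of the Sturm--Liouville problem
\[
-\sder{w} = \lambda \, \frac{w}{r^2} \quad \text{on } (1, R), \qquad w(1) = w(R) = 0 .
\]
Positivity and existence follow from the Poincar\'e inequality together with the compact embedding of $H^1_0(1, R)$ into $L^2((1, R); r^{-2}\, \d r)$. The ODE $r^2 \sder{w} + \lambda w = 0$ is of Euler type; the ansatz $w = r^\alpha$ yields the indicial equation $\alpha(\alpha - 1) + \lambda = 0$, whose roots are $\alpha_\pm = \tfrac12 \pm \tfrac12 \sqrt{1 - 4\lambda}$. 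For $\lambda \leq 1/4$ the two independent real solutions are either distinct power functions $r^{\alpha_\pm}$, or the pair $r^{1/2}$ and $r^{1/2} \log r$ at the critical value $\lambda = 1/4$; in each case, a short direct computation rules out any nontrivial linear combination vanishing at both endpoints, so necessarily $\lambda_1 > 1/4$.

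Finally, in the regime $\lambda > 1/4$ the general real solution takes the form $w(r) = r^{1/2}\bigl(A \cos(\mu \log r) + B \sin(\mu \log r)\bigr)$ with $\mu := \tfrac12 \sqrt{4\lambda - 1}$. The condition $w(1) = 0$ yields $A = 0$, and $w(R) = 0$ then forces $\mu \log R = k \pi$ for some positive integer $k$. The minimum is attained at $k = 1$, giving
\[
C_H(R) = \lambda_1 = \frac{1}{4} + \frac{\pi^2}{(\log R)^2} > \frac{1}{4}
\]
for every $R > 1$, as required. The main technical point is the Hardy-type substitution $w = r v$ at the outset; once it is in hand, the conclusion reduces to an explicit diagonalization of the resulting Euler operator on $[1, R]$.
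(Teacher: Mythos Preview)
Your proof is correct and takes essentially the same route as the paper: both identify $C_H(R)$ with the first eigenvalue of the associated Euler-type Sturm--Liouville problem and compute it explicitly, the only cosmetic difference being that you first pass to $w = rv$ whereas the paper applies the change of variable $r = e^t$ directly to the equation for $v$. Your explicit formula $C_H(R) = \tfrac14 + \pi^2/(\log R)^2$ is in fact the correct one; the paper's printed value $\tfrac14 + \pi^2/\log R$ appears to be a typo.
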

\begin{proof}
We consider the following minimization problem with constraints:
\[
 C_H(R) := \min\left\{ \int_1^R r^2 {\der{v}}^2 \, \d r \colon v\in H^1_0(1, \, R), \, \int_1^R v^2 \, \d r = 1 \right\} .
\]
Using standard methods in the calculus of variations, one can check that a minimizer exists.
By Lagrange's multiplier theorem, any minimizer solves the eigenvalue problem
\begin{equation} \label{eigen}
\begin{cases}
 - \dfrac{\d}{\d r} \left( r^2 \der{v}(r) \right) = \lambda v (r) \\
 v(1) = v(R) = 0,
 \end{cases}
\end{equation}
and, in particular,
\[
 r^2 \sder{v} + 2 r \der{v} + \lambda r = 0 .
\]
This equation can be easily solved, e.g., with the change of variable $r = e^t$, $u(t) = v(r)$.
One finds a necessary and sufficient condition for the existence
of a non-trivial solution $v\not\equiv 0$ to~\eqref{eigen}, namely
that
\[
 \lambda = \lambda_k (R) := \frac{k^2\pi^2}{\log R} + \frac14 \qquad \textrm{for some } k\in\N\setminus \{0\} .
\]
Thus, the $\lambda_k$'s are the eigenvalues for~\eqref{eigen} and
\begin{equation} \label{optimal hardy constant}
C_H(R) = \lambda_1(R) = \frac{\pi^2}{\log R} + \frac14 .
\end{equation}
This proves the lemma.
\end{proof}

The proof of Theorem~\ref{th:1} now follows from
the previous lemmas.

\begin{proof}[Proof of Theorem~\ref{th:1}]
 To prove the minimality of the hedgehog, we must show that
 \[
  I[\Qvec] - I[\Hvec] \geq 0 .
 \]
 By Equation~\eqref{energy difference}, we have
 \[
  \begin{split}
   I[\Qvec] - I[\Hvec]
               &\geq \int_{\Omega} h^2 \left(\frac{1}{2} \abs{\grad \Wvec}^2 - \frac {3}{r^2} \abs{\Wvec}^2\right)~\d V \\
               &\qquad + \int_\Omega h_+ h^3 \left\{\psi(\Wvec) + \frac{3h}{8}\left(2\frac{\Hvec}{h}\cdot\Wvec + \abs{\Wvec}^2\right)^2 \right\}~\d V \, .
  \end{split}
 \]
By virtue of~\eqref{hp: h large} and Lemma~\ref{lemma: psi
positive}, the second integral is non negative and we obtain
\[
 I[\Qvec] - I[\Hvec] \geq \int_{\Omega} \frac{4}{9}\left(\frac 12 \abs{\grad \Wvec}^2 - \frac {3}{r^2} \abs{\Wvec}^2\right)~\d V.
\]
 We can now write the integral using spherical coordinates, apply Fubini's theorem
 and Lemma~\ref{lemma: hardy ineq} to get
\[
 I[\Qvec] - I[\Hvec] \geq \frac{4}{9}\left(\frac12 C_H(R) - 3\right) \int_{\Omega} \frac{1}{r^2} \abs{\Wvec}^2~\d V .
\]
The constant $C_H(R)$ is given explicitly by~\eqref{optimal hardy
constant}. Finally, from assumption~\eqref{hp: R small},
we have
\[
 \frac12 C_H(R) - 3 = \frac{\pi^2}{2\log R} - \frac{23}{8} > 0.
\]
for $R < \min\left\{R_0, \, R^* \right\}$.
Hence, we conclude that $I[\Qvec] - I[\Hvec] \geq 0$, with equality if and only if~$\Qvec = \Hvec$.
\end{proof}

\section{Minimality of the Hedgehog for Large \texorpdfstring{$t$}{t}}
\label{sec:t}

This section is devoted to the proof of Theorem~\ref{th:2}, i.e., 
showing that the radial-hedgehog is energy-minimizing for all $R > 1$ (independent of $t$) and sufficiently large $t$.
As a preliminary step, we adapt the proof by Ignat et al.
\cite{ignat2014} and prove that the radial-hedgehog is locally
stable (i.e., the second variation of the energy is positive) when
the temperature $t$ is large enough, without restriction on $R -
1$. We first show that the temperature $t$ uniformly controls the
order parameter, $h$, for large enough $t>0$.
\begin{lemma} \label{lemma: h}
 Let $h\in H^1(1, \, R)$ be a minimizer of
 \begin{equation} \label{h energy}
  E[h] := \int_1^R \left\{ \frac 12 {\der{h}}^2 + \frac{3}{r^2} h^2 + \frac t8 (1 - h^2)^2 
  + \frac{h_+}{8} (1 + 3h^4 - 4 h^3)\right\} \, r^2 \d r,
 \end{equation}
 with the boundary conditions $h(1) = h(R) = 1$.
 Then, $0 < h \leq 1$ for all $t \geq 0$. Moreover,
 \[
  h \to 1 \qquad \textrm{uniformly as } t\to \infty .
 \]
\end{lemma}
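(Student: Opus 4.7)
My plan splits into two independent claims. For the pointwise bounds $0 < h \leq 1$, I would argue by truncation. Given a minimizer $h$ of~\eqref{h energy}, set $\tilde h := \min(|h|, 1)$, which remains admissible since $\tilde h(1) = \tilde h(R) = 1$. I would then verify that each term of the integrand is non-increased under the map $h \mapsto \tilde h$: the gradient term since $|\der{\tilde h}| \leq |\der{h}|$ a.e.; the terms $(3/r^2)h^2$ and $(1 - h^2)^2$ by direct inspection; and the cubic $1 + 3 h^4 - 4 h^3$ because the reflection $h \mapsto |h|$ cannot increase $-4 h^3$ on $\{h < 0\}$, while on $\{|h| \geq 1\}$ the polynomial $u \mapsto 1 + 3 u^4 - 4 u^3$ is non-negative and strictly increasing. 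This gives $E[\tilde h] \leq E[h]$ with equality only if $0 \leq h \leq 1$ a.e., so minimality yields $0 \leq h \leq 1$. Strict positivity would then follow from standard ODE uniqueness applied to~\eqref{eq:12}: if $h$ vanished at some interior $r_* \in (1, R)$, then $\der{h}(r_*) = 0$ (an interior minimum of the non-negative $C^2$ function $h$), and uniqueness of the Cauchy problem for the smooth ODE~\eqref{eq:12} with data $h(r_*) = \der{h}(r_*) = 0$ would force $h \equiv 0$, contradicting $h(1) = 1$.

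For the uniform convergence $h \to 1$ as $t \to \infty$, my plan is to combine an energy comparison with one-dimensional Sobolev compactness. Testing $E$ against the constant $h \equiv 1$, which is admissible, yields
\[
 E[h_t] \leq E[1] = 3(R - 1)
\]
uniformly in $t$, since $(1 - h^2)^2$ and $1 + 3 h^4 - 4 h^3$ both vanish at $h = 1$. This single bound simultaneously provides a uniform $H^1(1, R)$-bound on $\{h_t\}$ and the decay
\[
 \int_1^R (1 - h_t^2)^2 \, r^2 \, \d r \leq \frac{24(R - 1)}{t} \xrightarrow[t \to \infty]{} 0 .
\]
Along any sequence $t_n \to \infty$, the compact embedding $H^1(1, R) \hookrightarrow C([1, R])$ extracts a subsequence $h_{t_n} \to h_\infty$ uniformly on $[1, R]$; combined with $0 \leq h_{t_n} \leq 1$, the $L^2$-decay forces $h_\infty^2 \equiv 1$ and hence $h_\infty \equiv 1$. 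Since the subsequential limit is uniquely determined, the full family $\{h_t\}$ converges uniformly to $1$.

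The main subtlety I anticipate is the ordering of the truncation in the first step: because $1 + 3 h^4 - 4 h^3$ is not symmetric in $h$, a naive cap $h \mapsto \min(h, 1)$ (without first reflecting) would fail to control $-4 h^3$ on $\{h < 0\}$, so one must first reflect and then cap to ensure every term of $E$ is simultaneously non-increased. Once the pointwise bound $0 \leq h \leq 1$ is secured, the uniform-convergence part reduces to a clean application of the direct method and Rellich--Kondrachov compactness.
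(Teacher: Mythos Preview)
Your proof is correct. For the pointwise bounds $0 < h \leq 1$, your truncation-plus-ODE-uniqueness argument matches the paper's (which appeals to energy minimality for $h \geq 0$, the maximum principle for $h \leq 1$, and the same Cauchy-uniqueness step for strict positivity). For the uniform convergence $h \to 1$, however, the paper takes a different and more direct route: evaluating the Euler--Lagrange ODE~\eqref{eq:12} at an interior minimum $r_{\min}$ of $h$, where $\der{h}(r_{\min}) = 0$ and $\sder{h}(r_{\min}) \geq 0$, yields $1 - h^2(r_{\min}) \leq 12/(t\,r_{\min}^2) \leq 12/t$, hence the explicit lower bound $h \geq \sqrt{1 - 12/t}$ on all of $[1,R]$. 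Your energy-comparison-plus-compactness argument is equally valid for the lemma as stated, but the paper's bound is quantitative and, crucially, \emph{independent of $R$}. That uniformity in $R$ is what is actually used downstream (Remark~\ref{remark:tau}) to make the threshold temperatures in Propositions~\ref{prop: stability large t}--\ref{prop: stability ineq}, and ultimately $\tau$ in Theorem~\ref{th:2}, independent of the shell width. Your route, anchored to $E[h_t] \leq 3(R-1)$ and the compact embedding on the fixed interval $[1,R]$, proves the lemma but would not deliver this $R$-uniformity without further work.
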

\begin{proof} The bounds $0< h \leq 1$ are easily established
\cite{ejam,am2}. Indeed, $h\geq 0$ from the energy minimality of
$h$ (refer to~\eqref{h energy}). The function $h \leq 1$, as an
immediate consequence of the maximum principle. We can easily
prove that $h > 0$. Indeed, we assume that there exists a point
$r_1$ such that $h(r_1) = 0$. Since we know that $h \geq 0$, $r_1$
must be a minimum point for $h$, so $\der{h}(r_1) = 0$. Then we
apply the classical well-posedness theory for Cauchy problems for
ODE's and conclude that $h \equiv 0$, which contradicts the
boundary conditions $h(1) = h(R) = 1$. Thus, we must have $h > 0$.

Finally, we check the uniform convergence of $h$ as $t \to
\infty$. Let $r_{\mathrm{min}}\in (1, \, R)$ be a minimum point
for $h$. We have $\sder{h}(r_{\mathrm{min}}) \geq 0$ and
$\der{h}(r_{\mathrm{min}}) = 0$. Therefore, by Equation
\eqref{eq:12},
\[
 - \frac{6}{r_{\mathrm{min}}^2} h(r_{\mathrm{min}}) \leq  f(h(r_{\mathrm{min}})) h(r_{\mathrm{min}}) \leq \frac {t}{2} \left(h^2(r_{\mathrm{min}}) - 1\right) h(r_{\mathrm{min}}) .
\]
We divide by $h(r_{\mathrm{min}}) > 0$ and obtain
\[
 1 - h^2(r_{\mathrm{min}}) \leq \frac{12}{t \, r_{\mathrm{min}}^2} \leq \frac{12}{t} .
\]
Thus,
\[
 1 \geq h \geq \sqrt{1 - \frac{12}{t}} \to 1 \qquad \textrm{as } t \to +\infty \textrm{, uniformly on } (1, \, R) . \tag*{\qedhere}
\]
\end{proof}

The second step in our analysis for large $t$ is the study of the
second variation of the energy. Recall that, given a variation
$\Vvec \in W^{1, 2}_0(\Omega; \, S_0)$ (i.e., $\Qvec = \Hvec +
\Vvec$), the second variation is given by
\begin{equation} \label{second variation}
 \frac12 \delta^2 I[\Hvec] = \int_\Omega \left\{ \frac{1}{2} \abs{\nabla \Vvec }^2 + \frac13 f_0(h) v_0^2 + f_2(h) (v_1^2 + v_2^2) + f_4(h) (v_3^2 + v_4^2) \right\}~\d V
\end{equation}
(see Equation~\eqref{eq:27}), where $v_0, \, \ldots, \, v_4$ are
the coordinates of $\Vvec$ with respect to the basis we have chosen and
\begin{equation} \label{f}
\begin{split}
 f_0 (h) &:= \frac t2 \left(3h^2 - 1\right) + \frac{3h_+}{2} \left(3h^2 - 2h\right) \\
 f_2 (h) &:= \frac t2 \left(h^2 - 1\right) + \frac{3h_+}{2} \left(h^2 -h\right) \\
 f_4 (h) &:= \frac t2 \left(h^2 - 1\right) + \frac{3 h_+}{2} \left(h^2 + 2h \right) .
\end{split}
\end{equation}
Note that~$f_2$ coincides with the function~$f$ defined by Equation~\eqref{f-alone}.

We want to show that the second variation is positive for every
choice of $\Vvec$ and large enough $t$. In~\cite{ignat2014}, it is
shown that the analysis of $\delta^2 I[\Hvec]$ can be reduced to
the study of the simpler functionals $\phi_{0, i}$, defined for
$i\in\N$ by
\begin{equation} \label{phi_{0,i}}
\begin{split}
\phi_{0,0} [v_0]
    &:= \frac{2}{3}\int_1^R \left\{|\der{v_0}|^2 + \frac{6}{r^2} v_0^2 + f_0(h)v_0^2\right\} r^2 \, \d r , \\
\phi_{0,i} [v_0, v_2, v_4]
     &:=  \int_1^R    \bigg\{\frac{\lambda_{0,i}}{3}|\der{v_0}|^2   + |\der{v_2}|^2   +  (\lambda_{0,i} - 2)|\der{v_4}|^2 \\
            &\qquad + \frac{1}{r^2}\bigg[\frac{\lambda_{0,i}(\lambda_{0,i} + 6)}{3} v_0^2 + (\lambda_{0,i} + 4) v_2^2  + (\lambda_{0,i} - 2)^2 v_4^2 \\
            &\qquad\qquad\qquad   - 4\lambda_{0,i}\,v_0\,v_2 + 4(\lambda_{0,i} - 2) \,v_2\,v_4 \bigg] \\
            &\qquad + \frac{\lambda_{0,i}}{3} f_0(h) v_0^2  + f_2(h) v_2^2 + (\lambda_{0,i}  - 2) f_4(h) v_4^2  \bigg\}\,r^2 \, \d r .
\end{split}
\end{equation}
The functions $v_0, \, v_2$ and $v_4$ depend on the radial
variable $r$ alone and belong to
\[
 H^1_0(1, \, R) = \left\{ w\in L^2 ([1, \, R]; \, \d r) \colon \der{w} \in L^2([1, \, R]; \, r^2\d r) \textrm{ and } w(1) = w(R) = 0 \right\} ,
\]
and $\lambda_{0,i} := i(i + 1)$. More precisely, in
\cite[Proposition~3.2]{ignat2014} and \cite[Proposition~3.4]{ignat2014}, the authors study the second variation
of the Landau-de Gennes energy about the radially symmetric solution (which has an isotropic point at the origin) on all of~$\Rr^3$ 
close to $t=0$, with the one-constant elastic energy density, $\frac{1}{2}|\grad \Qvec|^2$ and show that the second variation
is non-negative definite if and only if the functionals 
\begin{equation} \label{ghi_{0,i}}
\begin{split}
\Phi_{0,0} [v_0]
    &:= \frac{2}{3}\int_1^R \left\{|\der{v_0}|^2 + \frac{6}{r^2} v_0^2 + g_0(s)v_0^2\right\} r^2 \, \d r , \\
\Phi_{0,i} [v_0, v_2, v_4]
     &:=  \int_1^R    \bigg\{\frac{\lambda_{0,i}}{3}|\der{v_0}|^2   + |\der{v_2}|^2   +  (\lambda_{0,i} - 2)|\der{v_4}|^2 \\
            &\qquad + \frac{1}{r^2}\bigg[\frac{\lambda_{0,i}(\lambda_{0,i} + 6)}{3} v_0^2 + (\lambda_{0,i} + 4) v_2^2  + (\lambda_{0,i} - 2)^2 v_4^2 \\
            &\qquad\qquad\qquad   - 4\lambda_{0,i}\,v_0\,v_2 + 4(\lambda_{0,i} - 2) \,v_2\,v_4 \bigg] \\
            &\qquad + \frac{\lambda_{0,i}}{3} g_0(s) v_0^2  + g_2(s) v_2^2 + (\lambda_{0,i}  - 2) g_4(s) v_4^2  \bigg\}\,r^2 \, \d r .
\end{split}
\end{equation}
are non-negative. Here $v_0, v_2, v_4$ are arbitrary functions of $r$ belonging to $H^1(\Rr^3)$ and $g_0, g_2, g_4$ 
are functions of an order parameter $s$, where $s$ only depends on $r$ and $s$ vanishes at $r=0$. 
Therefore, the functions $g_0, g_2, g_4$ are different to the functions $f_0, f_2, f_4$ in Equation~\eqref{phi_{0,i}}
(in particular, the $s$ in Equation~\eqref{ghi_{0,i}} is different compared to the $h$ in Equation~\eqref{phi_{0,i}}). 
However, their method of proof only depends on the elastic energy density and hence, we can appeal to their result to reduce 
the study of the second variation of the radial-hedgehog solution on a bounded $3D$ shell to a study of the functionals in Equation~\eqref{phi_{0,i}} above.

Further, in \cite[Proposition~4.1]{ignat2014} and \cite[Lemma~4.3]{ignat2014}, the authors prove that $\Phi_{0,3} \geq \Phi_{0,2}$ 
and $\Phi_{0,i} \geq 0$ if $\Phi_{0,0}, \Phi_{0,1}\geq 0$, for all $i\geq 4$. Again, these conclusions only rely on the gradient 
contributions to the second variation and are independent of $g_0, g_2$ and $g_4$ in Equation~\eqref{ghi_{0,i}}.
Hence, we can transfer these results to our framework to conclude that $\phi_{0,i} \geq 0$ for all $i\geq 3$
provided that $\phi_{0,k}\geq 0$ for $k=0,1,2$ in Equation~\eqref{phi_{0,i}}.
Therefore, we just need to study the functionals $\phi_{0, i}$ for $i\in\{0, \, 1, \, 2\}$.
To this purpose, we cannot use the same method as in
\cite{ignat2014}, because in our case $h$ has an intermediate
minimum in~$[1, \, R]$ and $\der{h}$ is not positive everywhere.
Instead, we use the Hardy decomposition trick, i.e. we write the
variables $v_i$ as $v_i = hw_i$, where $h$ is the hedgehog profile
and is a classical solution of the differential equation
\eqref{eq:12}.

\begin{lemma} \label{lemma: hardy}
 Consider the functional
 \begin{equation} \label{phi}
  \phi[v] := \int_1^R \left\{ \alpha {\der{v}}^2 + \frac{\beta}{r^2} v^2 + \alpha \left(f(h) + \gamma\right) v^2 \right\} r^2 \, \d r,
 \end{equation}
 defined for $v\in H^1_0(1, \, R)$, where $f = f_2$ is given by~\eqref{f-alone}, $h$ is a minimizer of~\eqref{h energy}, and $\alpha, \, \beta, \, \gamma\in\Rr$ are fixed parameters.
 Then $\phi$ can be equivalently written as
 \[
  \phi[v] = \int_1^R \left\{ \alpha {\der{\left(\frac{v}{h}\right)}}^2 h^2 + \frac{\beta - 6\alpha}{r^2} v^2 + \alpha\gamma v^2 \right\} r^2 \, \d r .
 \]
\end{lemma}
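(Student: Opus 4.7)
The plan is to execute the Hardy decomposition substitution $v = hw$ and apply integration by parts to the resulting cross term, using the Euler--Lagrange equation~\eqref{eq:12} satisfied by $h$ to cancel all the undesired terms. Since Lemma~\ref{lemma: h} guarantees $h > 0$ on $[1,R]$, the map $v \mapsto w := v/h$ is well defined, and the boundary conditions $h(1) = h(R) = 1$, $v(1) = v(R) = 0$ imply $w \in H^1_0(1,R)$.

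First I would write $\der{v} = \der{h}\, w + h \, \der{w}$ and expand
\[
  {\der{v}}^2 = {\der{h}}^2 w^2 + 2h \der{h}\, w \der{w} + h^2 {\der{w}}^2 .
\]
The middle term is the one I want to rewrite. Noting that $2 w \der{w} = (w^2)^\prime$, integration by parts gives
\[
  \int_1^R 2h\der{h}\, w\, \der{w}\, r^2 \, \d r
  = \left[ h \der{h}\, r^2\, w^2 \right]_1^R
  - \int_1^R \left( h \sder{h}\, r^2 + {\der{h}}^2 r^2 + 2 h \der{h}\, r\right) w^2 \, \d r,
\]
and the boundary term vanishes since $w(1) = w(R) = 0$. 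Substituting back, the ${\der{h}}^2 w^2$ terms cancel and I obtain
\[
  \int_1^R \alpha {\der{v}}^2 r^2 \, \d r
  = \int_1^R \alpha \left[ h^2 {\der{w}}^2 r^2 - h \left( \sder{h} + \frac{2}{r}\der{h}\right) w^2 r^2 \right] \d r .
\]

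Next I would invoke the ODE~\eqref{eq:12}, namely $\sder{h} + \frac{2}{r}\der{h} = \frac{6}{r^2} h + h f(h)$, to substitute for the term in parentheses. This produces
\[
  \int_1^R \alpha {\der{v}}^2 r^2 \, \d r
  = \int_1^R \alpha \left[ h^2 {\der{w}}^2 r^2 - 6 h^2 w^2 - h^2 f(h)\, r^2 w^2 \right] \d r .
\]
Using $v = hw$, the last two terms become $-6\alpha\, v^2$ and $-\alpha f(h)\, r^2 v^2$. Plugging this into the definition of $\phi[v]$, the $\alpha f(h)\, r^2 v^2$ contribution cancels exactly the $f(h)$ term inside $\phi$, leaving
\[
  \phi[v] = \int_1^R \left\{ \alpha h^2 {\der{w}}^2 r^2 + \frac{\beta - 6\alpha}{r^2} v^2 \cdot r^2 + \alpha\gamma\, v^2 \cdot r^2 \right\} \frac{\d r}{r^2} \cdot r^2,
\]
which, once the $r^2$ factors are tidied up and $w$ is reverted to $v/h$, is exactly the claimed identity.

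There is no genuine obstacle here; the only point of care is making sure the boundary term in the integration by parts vanishes, for which the strict positivity of $h$ from Lemma~\ref{lemma: h} (so that $w$ inherits the zero boundary values of $v$) is essential. Everything else is bookkeeping driven by the identity $\sder{h} + (2/r)\der{h} = (6/r^2) h + h f(h)$.
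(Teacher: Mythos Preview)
Your argument is correct and is exactly the Hardy-type substitution the paper has in mind; indeed the paper omits the details and simply refers back to the identical computation in Theorem~\ref{prop:3}, so you have in fact written out what the authors left implicit. The only cosmetic blemish is the final displayed line, where the bookkeeping of the $r^2$ factors is awkwardly written (the $\frac{\d r}{r^2}\cdot r^2$ is confusing, though ultimately correct); tidying that to $\int_1^R\{\alpha h^2(\der{w})^2 + \frac{\beta-6\alpha}{r^2}v^2 + \alpha\gamma v^2\}\,r^2\,\d r$ would make the conclusion transparent.
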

\begin{proof} The proof follows from 
a Hardy-type trick and the calculations are similar to those employed in Theorem~\ref{prop:3}. 
The details are omitted here for brevity. \end{proof}

With the help of the previous lemma, we can now complete the
analysis of the second variation.

\begin{proposition} \label{prop: stability large t}
 There exists $t_* > 0$ such that the radial-hedgehog is a locally stable
 equilibrium for Problem~\eqref{pb:Landau-dG}, for all $t \geq t_*$ and $R > 1$.
\end{proposition}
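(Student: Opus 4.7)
The plan is to build on the reduction recalled just before the statement: local stability of~$\Hvec$ is equivalent to the non-negativity of the three radial functionals $\phi_{0,0}$, $\phi_{0,1}$, $\phi_{0,2}$ from~\eqref{phi_{0,i}}. For each of these I apply the Hardy decomposition of Lemma~\ref{lemma: hardy} to the ``$f_2(h)$-part'' of the potential, and then close the estimate using the uniform convergence $h \to 1$ on $[1, \, R]$ as $t \to \infty$ (Lemma~\ref{lemma: h}) together with the Hardy inequality of Lemma~\ref{lemma: hardy ineq}.

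Concretely, I split the diagonal potential terms as $f_j(h) v_j^2 = f_2(h) v_j^2 + (f_j - f_2)(h) v_j^2$ for $j\in\{0, \, 4\}$ and apply Lemma~\ref{lemma: hardy} with $f = f_2$ to each piece of the form $|v_j'|^2 + \beta r^{-2} v_j^2 + f_2(h) v_j^2$, obtaining a non-negative ``shifted kinetic'' integral $\int |(v_j/h)'|^2 h^2 r^2\, \d r$ plus a mass term with coefficient $\beta - 6$. For the coefficients appearing in~\eqref{phi_{0,i}}, this coefficient is positive on $v_0^2$, zero on $v_2^2$, and equal to~$-2$ on $v_4^2$ (the latter variable only occurs in $\phi_{0, 2}$). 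The differences $(f_0 - f_2)(h) = t h^2 + \tfrac{3 h_+}{2} h(2h - 1)$ and $(f_4 - f_2)(h) = \tfrac{9 h_+}{2} h$ are, by Lemma~\ref{lemma: h} and the asymptotic $h_+ \sim \sqrt{t/2}$, bounded below by positive constants times~$t$ and~$\sqrt{t}$ respectively, for $t$ large.

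After this rewriting, each $\phi_{0, i}$ is the sum of non-negative kinetic integrals, potential terms of size comparable to~$t$ on $v_0^2$ and to~$\sqrt{t}$ on $v_4^2$, harmless constant-coefficient mass terms, the small negative mass $-2\int v_4^2\, \d r$ (easily absorbed by the $\sqrt{t}$ potential when $t$ is large), and the cross terms $v_0 v_2$ and $v_2 v_4$ now weighted by $\d r$ rather than $r^{-2} \d r$. I apply Young's inequality to these cross terms, dumping the $v_0^2$ contribution into the $t$-sized potential and the $v_4^2$ contribution into the $\sqrt{t}$-sized potential. The price paid is a residual debt on $v_2^2$ of magnitude $O(1/t) + O(1/\sqrt{t})$.

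The main obstacle is that the variable $v_2$ receives no temperature-dependent boost from the potential, so this residual debt must be paid using its kinetic term alone. Here Lemma~\ref{lemma: hardy ineq} is essential: since $h \leq 1$ and Lemma~\ref{lemma: h} gives $h^2 \geq 1 - 12/t$, I obtain
\[
  \int_1^R \bigl|(v_2/h)'\bigr|^2 h^2 \, r^2 \, \d r \;\geq\; \left(1 - \frac{12}{t}\right) C_H(R) \int_1^R v_2^2 \, \d r ,
\]
and the crucial point is that $C_H(R) > 1/4$ for every~$R > 1$. This provides a positive credit bounded away from zero \emph{uniformly in~$R$}, which defeats the vanishing debt for all $t$ above some universal threshold $t_* > 0$. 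This yields $\phi_{0, i} > 0$ on the respective admissible spaces for each $i \in \{0, \, 1, \, 2\}$, and hence local stability of $\Hvec$ for all $t \geq t_*$ and all $R > 1$.
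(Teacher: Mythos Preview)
Your proposal is correct and follows essentially the same route as the paper: reduce to $\phi_{0,0}$, $\phi_{0,1}$, $\phi_{0,2}$, apply the Hardy decomposition of Lemma~\ref{lemma: hardy} to each variable (using \eqref{f0} and \eqref{f4} to isolate the $f_2$-part), control the cross terms by Young's inequality, and absorb the $v_0^2$- and $v_4^2$-debts into the $t$- and $h_+$-sized potentials respectively, with the Hardy inequality of Lemma~\ref{lemma: hardy ineq} supplying the uniform-in-$R$ credit on~$v_2^2$.

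Two minor remarks. First, your blanket claim that the post-Hardy mass on~$v_2^2$ is ``zero'' is correct only for $\phi_{0,1}$; for $\phi_{0,2}$ one has $\beta - 6 = 10 - 6 = 4 > 0$. The paper actually exploits this $+4$ directly to pay the $v_2^2$-debt in Step~2, so it never invokes Lemma~\ref{lemma: hardy ineq} for~$\phi_{0,2}$; your variant, which routes the $v_2^2$-debt through the kinetic term in both cases, works just as well (and is arguably cleaner). Second, the paper uses \emph{fixed} Young parameters and then makes the resulting $v_0^2$- and $v_4^2$-debts harmless by taking $t$ large, whereas you use $t$-dependent Young parameters to force the $v_2^2$-debt to be $O(1/t)+O(1/\sqrt t)$; these are equivalent re-balancings of the same estimate.
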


\begin{proof}
By the previous discussion, it is enough to prove the positivity of $\phi_{0, i}$ defined above, for $i\in\{0, \, 1, \, 2\}$.
Throughout the proof, we fix $v_0, \, v_2, \, v_4\in C^\infty_c(1, \, R)$, and set $w_k := v_k/h$ for $k\in\{0, \, 2, \, 4\}$.
It is not restrictive to assume that the $v_k$'s are regular, because $C^\infty_c(1, \, R)$ is dense in $H^1_0(1, \, R)$.

\setcounter{step}{-1}
\begin{step}[Study of $\phi_{0,0}$]
We remark that in view of~\eqref{f}, we have
\begin{equation} \label{f0}
 f_0(h) = f(h) + t h^2 + \frac{3h_+}{2} \left(2 h^2 - h\right) .
\end{equation}
Hence, we can apply Lemma~\ref{lemma: hardy} to $\phi_{0, 0}$:
\[
\begin{split}
 \frac32 \phi_{0,0} [v_0] &= \int_1^R \left\{ |\der{w_0}|^2 + \left(t h^2 + \frac{3h_+}{2} (2 h^2 - h) \right) w_0^2 \right\} h^2 r^2 \, \d r \, .
 \end{split}
\]
By Lemma~\ref{lemma: h}, there exists $t_0 > 0$ such that $h \geq
1/2$ for $t \geq t_0$. As a consequence,
\[
 t h^2 + \frac{3h_+}{2} (2 h^2 - h) \geq \frac14 t_0 > 0
\]
and $\phi_{0, 0} [v_0] >0$ when $t \geq t_0$, with equality if and only if $v_0 = 0$.
\end{step}

\begin{step}[Study of $\phi_{0,1}$]
We recall the definition of $\phi_{0,1}$, noting that $\lambda_{0,1} = 2$:
\[
 \begin{split}
  \phi_{0,1} [v_0, v_2]
     &:=  \int_1^R \bigg\{\frac{2}{3}|\der{v_0}|^2 + |\der{v_2}|^2  + \frac{1}{r^2}\left(\frac{16}{3} v_0^2 + 6 v_2^2 
     - 8\,v_0\,v_2 \right) \\
            &\qquad + \frac{2}{3} f_0(h) v_0^2  + f_2(h) v_2^2  \bigg\}\,r^2 \, \d r .
 \end{split}
\]
With the help of~\eqref{f0}, we apply Lemma~\ref{lemma: hardy}
first to terms in $v_0$, followed by terms in $v_2$. We obtain
\[
 \begin{split}
  \phi_{0,1} [v_0, v_2]  &=  \int_1^R \bigg\{\frac{2}{3}|\der{w_0}|^2  + |\der{w_2}|^2  + \frac{1}{r^2}\left(\frac{4}{3} w_0^2 - 8\,w_0\,w_2 \right) \\
             &\qquad + \frac{2}{3} \left(t h^2 + \frac{3h_+}{2} (2 h^2 - h)\right) w_0^2 \bigg\} h^2 r^2 \, \d r .
 \end{split}
\]
By virtue of Lemma~\ref{lemma: hardy ineq}, we have
\[
 \begin{split}
  \phi_{0,1} [v_0, v_2]  &\geq  \int_1^R \left\{\frac{2}{3}|\der{w_0}|^2  + \frac{2}{3} \left(t h^2 + \frac{3h_+}{2} (2 h^2 - h)\right) w_0^2 \right\} h^2 r^2 \d r\\
             &\qquad + \int_1^R \left\{\frac{4}{3}h_{\textrm{min}}^2 w_0^2 - 8 |w_0\,w_2| + \frac{1}{4}h_{\textrm{min}}^2 w_2^2\right\}  \, \d r ,
 \end{split}
\]
where $h_{\textrm{min}} := \min_{[1, \, R]} h >0$. For $t \geq
t_0$, we have $h_{\textrm{min}}^2/4\geq 1/16$ and
\[
 - 8 |w_0\,w_2| \geq -256 w_0^2 - \frac{1}{16} w_2^2 \geq -256 w_0^2 - \frac 14 h_{\textrm{min}}^2 w_2^2 .
\]
Since $h$ converges uniformly to $1$ (see Lemma~\ref{lemma: h}), there exists some $t_1 \geq t_0$ such that, for all $t\geq t_1$ and $r \geq 1$,
\[
  \frac{2}{3} \left(t h^2 + \frac{3h_+}{2} (2 h^2 - h)\right) h_{\textrm{min}}^2 r^2 + \frac{4}{3}h_{\textrm{min}}^2 \geq 256 .
\]
We combine these inequalities to obtain $\phi_{0, 1}[v_0, \, v_2]
\geq 0$ for $t \geq t_1$ (with equality if and only if $v_0 = v_2
= 0$).
\end{step}
\begin{step}[Study of $\phi_{0,2}$]
Recall that $\phi_{0,2}$ is given by
 \[
  \begin{split}
  \phi_{0,2}[v_0, v_2, v_4]
        &:= \int_1^R \bigg\{2| \der{v_0}|^2 + |\der{v_2}|^2 + 4|\der{v_4}|^2 \\
        &\qquad + \frac{1}{r^2}\left(24 v_0^2 + 10 v_2^2 + 16 v_4^2 - 24 v_0 v_2 + 16 v_2 v_4\right) \\
        &\qquad + 2 f_0(h) v_0^2 + f_2(h) v_2^2 + 4 f_4(h)v_4^2\bigg\}r^2 \, \d r
\end{split}
\]
(set $\lambda_{0,2} = 6$ in Equation~\eqref{phi_{0,i}}). Given
that
\begin{equation} \label{f4}
 f_4(h) = f(h) + \frac{9h_+}{2} h
\end{equation}
and $f_0(h)$ is given by~\eqref{f0}, we can apply
Lemma~\ref{lemma: hardy}:
\[
 \begin{split}
  \phi_{0,2}[v_0, v_2, v_4]
        &= \int_1^R \bigg\{2| \der{w_0}|^2 + |\der{w_2}|^2 + 4|\der{w_4}|^2 \\
        &\qquad + \frac{1}{r^2}\left(12 w_0^2 + 4 w_2^2 -8 w_4^2 - 24 w_0 w_2 + 16 w_2 w_4\right) \\
        &\qquad + \left(2t h^2 + 3h_+ (2 h^2 - h)\right) w_0^2 + 18h_+h w_4^2\bigg\}h^2r^2 \, \d r .
\end{split}
\]
Clearly, we have
\begin{equation} \label{step3, 1}
 \begin{split}
 \phi_{0,2}[v_0, v_2, v_4]
        &\geq \int_1^R \bigg\{2| \der{w_0}|^2 + |\der{w_2}|^2 + 4|\der{w_4}|^2
        + \left(12 + 2t h^2 + 3h_+ (2 h^2 - h)\right) w_0^2 \\
        &\qquad + 4 w_2^2 + \left(18 h_+h_{\mathrm{min}} - 8\right)w_4^2 - 24 w_0 w_2 + 16 w_2 w_4 \bigg\} h^2 \, \d r .
\end{split}
\end{equation}
By applying the Cauchy-Schwarz inequality, we obtain the following
inequality
\[
 - 24 w_0 w_2 + 16 w_2 w_4 \geq - 72 w_0^2 - 4 w_2^2 - 32 w_4^2 .
\]
Recalling that $h \to 1$ uniformly (see Lemma~\ref{lemma: h}), it
is possible to find $t = t_2 \geq 0$ such that
\[
 12 + 2t h^2 + 3h_+ (2 h^2 - h) \geq 72 \qquad \textrm{and} \qquad 18 h_+h_{\mathrm{min}} - 8 \geq 32
\]
for $t\geq t_2$. Hence, from~\eqref{step3, 1}, we conclude that
\begin{equation*} \label{step3}
 \phi_{0, 2}[v_0, \, v_2, \, v_4] \geq 2\norm{\der{w_0}}_{L^2(1, \, R)}^2 + \norm{\der{w_2}}_{L^2(1, \, R)}^2 + 4\norm{\der{w_4}}_{L^2(1, \, R)}^2 ,
\end{equation*}
for any $t\geq t_2$. In particular, $\phi_{0, 2}[v_0, \, v_2, \,
v_4] \geq 0$ and  equality holds if and only if $v_0 = v_2 = v_4 =
0$.
\end{step}

In the previous steps, we have shown that $\phi_{0,0}$,
$\phi_{0,1}$ and $\phi_{0,2}$ are positive definite in their
arguments for $t \geq t_* := \max\{t_0, \, t_1, \, t_2 \}$. By the
results presented in~\cite{ignat2014}, this is enough to prove the
proposition.
\end{proof}

\begin{remark} \label{remark:tau}
 By Lemma~\ref{lemma: h}, the function~$h$ is bounded from below by a quantity which does not depend on~$R$ (and tends to~$1$ as~$t\to +\infty)$.
 Therefore,~$t_0$, $t_1$, $t_2$ and consequently,~$t_*$ can be chosen independently of~$R$.
\end{remark}

The same method of proof applies to the following result, which
yields an improved lower bound for the second variation.

\begin{proposition} \label{prop: stability ineq}
 Let $\alpha$, $\beta$ be two parameters such that $0 < \alpha < 1/2$, $0 < \beta < 9/2$.
 There exists a $t^* \geq 1$ (depending on $\alpha$, $\beta$) such that the inequality
 \[
  \frac 12 \delta^2 I[\Hvec] \geq \int_\Omega\left\{ \frac t3 h^2 v_0^2  + \alpha h_+ v_0^2 + \beta h_+ \left(v_3^2 + v_4^2 \right)\right\}~\d V
 \]
 holds for any $t \geq t^*$, $R > 1$ and any function $\Vvec\in W^{1, 2}_0(\Omega; \, S_0)$.
 Here the $v_i$'s denote the components of $\Vvec$ with respect to the
 basis~$\mathbf E$, $\mathbf F$, $\mathbf G$, $\mathbf X$, $\mathbf Y$.
\end{proposition}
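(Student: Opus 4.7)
The plan is to mimic the proof of Proposition~\ref{prop: stability large t}. I define the modified quadratic functional
\[
\mathcal{Q}[\Vvec] := \frac{1}{2}\delta^2 I[\Hvec] - \int_\Omega \left\{\frac{t}{3}h^2 v_0^2 + \alpha h_+ v_0^2 + \beta h_+ (v_3^2 + v_4^2)\right\}~\d V,
\]
so that the claim reduces to $\mathcal{Q}[\Vvec] \geq 0$ for all $t \geq t^*$ and $\Vvec \in W^{1,2}_0(\Omega; \, S_0)$. Inspecting Equation~\eqref{second variation}, $\mathcal{Q}$ is still a quadratic form with the same gradient part but with $f_0$ and $f_4$ replaced by
\[
\tilde f_0(h) := f_0(h) - t h^2 - 3\alpha h_+ = f(h) + \frac{3h_+}{2}(2h^2 - h) - 3\alpha h_+,
\]
\[
\tilde f_4(h) := f_4(h) - \beta h_+ = f(h) + \left(\frac{9h}{2} - \beta\right) h_+,
\]
where I used Equations~\eqref{f0} and~\eqref{f4}. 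By Lemma~\ref{lemma: h}, $h\to 1$ uniformly as $t\to\infty$, so the non-$f(h)$ portions of $\tilde f_0$ and $\tilde f_4$ converge uniformly in $r$ to the strictly positive constants $3h_+(1/2 - \alpha)$ and $(9/2 - \beta) h_+$ respectively, thanks precisely to the hypotheses $\alpha < 1/2$ and $\beta < 9/2$.

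Next, I would observe that the reduction of~\cite{ignat2014} already invoked in the proof of Proposition~\ref{prop: stability large t}, which diagonalises the second variation via spherical harmonics and shows that only the modes $i \in \{0, 1, 2\}$ need to be checked, depends only on the gradient structure of the quadratic form. It therefore applies verbatim to $\mathcal{Q}$, reducing the problem to showing the positivity of the modified one-dimensional functionals $\tilde\phi_{0,i}$ obtained from~\eqref{phi_{0,i}} by the substitutions $f_0 \to \tilde f_0$ and $f_4 \to \tilde f_4$, for $i \in \{0, 1, 2\}$.

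For each of these three values of $i$, I would repeat the corresponding step in the proof of Proposition~\ref{prop: stability large t} essentially line by line. Lemma~\ref{lemma: hardy} strips off the common $f(h)$ contribution, leaving effective diagonal coefficients of $w_0^2$ and $w_4^2$ of the form $h^2 \bigl[\tfrac{3h_+}{2}(2h^2 - h) - 3\alpha h_+\bigr]$ and $h^2 h_+\bigl(\tfrac{9h}{2} - \beta\bigr)$, both uniformly bounded below by a positive constant for $t$ large. The residual cross terms in $w_0 w_2$ and $w_2 w_4$ appearing in $\tilde\phi_{0,1}$ and $\tilde\phi_{0,2}$ are then absorbed by Cauchy--Schwarz combined with Lemma~\ref{lemma: hardy ineq}, exactly as in the previous proof.

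The main obstacle will be to quantify $t^*$ so that the Cauchy--Schwarz absorption still leaves a non-negative residual after the subtraction of the right-hand-side terms: this is the point where the strict inequalities $\alpha < 1/2$ and $\beta < 9/2$ are used in a sharp way, as they guarantee a fixed positive gap that survives the $t\to\infty$ limit. As in Remark~\ref{remark:tau}, the uniform lower bound on $h$ is independent of $R$, so the resulting threshold $t^*$ depends only on $\alpha$ and $\beta$, and is uniform in $R > 1$.
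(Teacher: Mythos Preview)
Your proposal is correct and follows essentially the same approach as the paper: define the difference functional, rewrite it as a second-variation-type quadratic form with $f_0$, $f_4$ replaced by $\tilde f_0$, $\tilde f_4$, observe that the reduction to the modes $i\in\{0,1,2\}$ from~\cite{ignat2014} depends only on the gradient part, and then rerun Steps~0--2 of Proposition~\ref{prop: stability large t} with the modified coefficients. One small wording issue: the quantities $3h_+(1/2-\alpha)$ and $(9/2-\beta)h_+$ are not ``constants'' but $t$-dependent and in fact tend to $+\infty$ as $t\to\infty$ (since $h_+\sim\sqrt{t/2}$); this is exactly what the paper uses to absorb the cross terms in Steps~1 and~2, and your argument relies on it implicitly when you say the residual after Cauchy--Schwarz is non-negative.
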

\begin{proof}
Consider the quantity
\[
 \mathcal F[\Vvec] := \frac12 \delta^2 I[\Hvec] - \int_\Omega\left\{ \frac t3 h^2 v_0^2  + \alpha h_+ v_0^2 + \beta h_+ \left(v_3^2 + v_4^2 \right)\right\}~\d V .
\]
Using formula~\eqref{second variation} for the second variation, we obtain
\[
\begin{split}
 \mathcal F[\Vvec] = \int_\Omega \bigg\{ \frac{1}{2} \abs{\nabla \Vvec }^2 &+ \frac13 \left( f_0(h) - t h^2 - 3\alpha h_+\right) v_0^2 \\
                   &+ f_2(h) (v_1^2 + v_2^2) + \left(f_4(h) - \beta h_+\right) (v_3^2 + v_4^2) \bigg\}~\d V .
 \end{split}
\]
By virtue of~\eqref{f0} and~\eqref{f4}, we can write
\begin{equation} \label{f0 bis}
 f_0(h) - t h^2 - 3\alpha h_+ = f(h) + \frac{3h_+}{2} \left(2 h^2 - h -2\alpha \right)
\end{equation}
and
\begin{equation} \label{f4 bis}
 f_4(h) - \beta h_+ = f(h) + h_+ \left(\frac{9h}{2} - \beta\right) .
\end{equation}
Recalling that $h\to 1$ uniformly (as $t \to \infty$) by Lemma~\ref{lemma: h} and
since we have fixed $\alpha < 1/2$, $\beta < 9/2$, we deduce that
\begin{equation} \label{f0 - f0bis}
 \frac{3h_+}{2} \left(2 h^2 - h -2\alpha \right) \to +\infty, \qquad h_+ \left(\frac{9h}{2} - \beta\right) \to +\infty
\end{equation}
as $t\to +\infty$. 
We can now apply the same arguments as in Proposition~\ref{prop:
stability large t} to the functional $\mathcal F$. The proof
carries over almost word by word. Namely, at the end of each step
0--2, one uses the property~\eqref{f0 - f0bis} to absorb the
negative contributions. We conclude that there exists $t^*\geq 1$
such that
\[
 \mathcal F[\Vvec] \geq 0 \qquad \textrm{for all } \Vvec \in W^{1, 2}_0 (\Omega; \, \, S_0)
\]
for $t\geq t^*$.
\end{proof}

The final ingredient in the proof of Theorem~\ref{th:2} is Lemma~\ref{lemma: phi positive} below.

\begin{lemma} \label{lemma: phi positive}
 There exists $h_*\in (0, \, 1)$ such that if $h \geq h_*$ everywhere on $[1, \, R]$, then
 \[
\begin{split}
\varphi(\Vvec) &:= \frac 25 v_0^2 + v_3^2 + v_4^2 - \frac{\sqrt 6}{2}\tr\Vvec^3 + \frac 38 \abs{\Vvec}^4 + 5 \left(2(\Hvec\cdot\Vvec) + \abs{\Vvec}^2\right)^2 \geq 0
\end{split}
\]
for every $\Vvec\in S_0$, with equality if and only if $\Vvec = 0$.
\end{lemma}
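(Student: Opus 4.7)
The plan is to follow the template of Lemma~\ref{lemma: psi positive}, now exploiting the extra quartic contribution $\tfrac{3}{8}|\Vvec|^4$ in $\varphi$ and, above all, the enlarged coefficient $5$ (versus $\tfrac{3h}{8}$ in Lemma~\ref{lemma: psi positive}) multiplying $\epsilon^2 := (2\Hvec\cdot\Vvec + |\Vvec|^2)^2 = (|\Qvec|^2 - h^2)^2$. Expanding $5\epsilon^2$ contributes an effective quartic coefficient $\tfrac{3}{8}+5 = \tfrac{43}{8}$ on $|\Vvec|^4$, and it is this large coefficient that supplies the room needed to compensate the less favourable quadratic part $\tfrac{2}{5}v_0^2 + v_3^2 + v_4^2$ of $\varphi$ (in place of the more negative $-\tfrac{1}{2}v_0^2 + \tfrac{9}{2}(v_3^2+v_4^2)$ which appeared in $\psi$).

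First, I would apply the symmetrisation argument of Lemma~\ref{lemma: lower bound psi} verbatim: every term of $\varphi$ other than $-\tfrac{\sqrt{6}}{2}\tr\Vvec^3$ depends on $\Vvec$ only through $v_0$, $v_1^2+v_2^2$ and $v_3^2+v_4^2$, so it suffices to prove the inequality for $\Vvec = v_0\Evec + v_1\Fvec + v_4\Yvec$ with $v_1, v_4 \geq 0$. Next, fix $h = 1$ and treat $\varphi$ as a quadratic in $u := v_1^2 \geq 0$: a direct expansion shows
\[
 \varphi = \varphi_0(v_0, v_4) + L(v_0, v_4)\, u + \tfrac{43}{2} u^2 ,
\]
where $\varphi_0 := \varphi|_{v_1 = 0}$ and $L(v_0, v_4) = \tfrac{43}{3}v_0^2 + 43 v_4^2 + \tfrac{77\sqrt{6}}{6}v_0 - \tfrac{3\sqrt{6}}{2}v_4$. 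Since the leading coefficient $\tfrac{43}{2}$ is positive, minimizing over $u \geq 0$ reduces the problem to the polynomial inequality
\[
 86\, \varphi_0(v_0, v_4) \geq L(v_0, v_4)^2 \qquad \text{for all } v_0 \in \Rr, \ v_4 \geq 0,
\]
with equality only at $(v_0, v_4) = (0, 0)$.

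The heart of the proof is the verification of this polynomial inequality. Explicit expansion gives
\[
 \varphi_0 = \tfrac{206}{15}v_0^2 + v_4^2 + \tfrac{13\sqrt{6}}{3}v_0^3 + \tfrac{43\sqrt{6}}{3}v_0 v_4^2 + \tfrac{43}{18}v_0^4 + \tfrac{43}{3}v_0^2 v_4^2 + \tfrac{43}{2}v_4^4 ,
\]
whose coefficients are non-negative except for the two cubic ones, which are problematic only when $v_0 < 0$. I would verify the inequality by case analysis: near the origin the leading quadratic form of $86\varphi_0 - L^2$ is positive definite (one checks positivity of both the trace and the determinant of its $2\times 2$ matrix), so the inequality holds strictly in a neighbourhood of $(0, 0)$; globally one splits into $v_0 \geq 0$ and $v_0 \leq 0$ and uses Young-type bounds, controlling the negative cubic contributions by the quartic terms $\tfrac{43}{18}v_0^4 + \tfrac{43}{3}v_0^2 v_4^2 + \tfrac{43}{2}v_4^4$ coming from $\tfrac{43}{8}|\Vvec|^4$. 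Finally, to extend the $h = 1$ conclusion to $h \in [h_*, 1]$ for some $h_* \in (0, 1)$, I would invoke a standard continuity and compactness argument: every coefficient above depends continuously on $h$, the inequality is strict off the origin at $h = 1$, and the quartic growth of $\varphi$ at infinity together with compactness on the unit sphere in $S_0$ then yield the desired $h_*$.

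The main obstacle is the polynomial verification itself. The quadratic part of $\varphi$ degenerates in the $(v_1, v_2)$ directions, so positivity there rests entirely on the quartic $\tfrac{3}{8}|\Vvec|^4 + 5\epsilon^2$; the large coefficient $5$ on $\epsilon^2$ (as opposed to $\tfrac{3h}{8}$ in Lemma~\ref{lemma: psi positive}) is precisely what makes the algebra close.
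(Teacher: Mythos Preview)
Your symmetrisation step and the expansion $\varphi = \varphi_0 + L\,u + \tfrac{43}{2}u^2$ in $u = v_1^2$ are correct. The gap is in the next move: minimising a quadratic with positive leading coefficient over $u\geq 0$ does \emph{not} reduce to $86\varphi_0 \geq L^2$. That inequality arises only when the unconstrained minimiser $u^* = -L/43$ is positive, i.e.\ when $L<0$; if $L\geq 0$ the minimum over $u\geq 0$ sits at $u=0$ and one merely needs $\varphi_0\geq 0$. And the stronger inequality $86\varphi_0 \geq L^2$ is actually false: the quartic parts of $86\varphi_0$ and $L^2$ coincide \emph{exactly}, both being $\bigl(\tfrac{43}{3}v_0^2 + 43v_4^2\bigr)^2$, so $86\varphi_0 - L^2$ is only a cubic in $(v_0,v_4)$. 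At $v_4 = 0$ one finds
\[
 86\,\varphi_0 - L^2 = \frac{5787}{30}\,v_0^2 + \frac{43\sqrt 6}{9}\,v_0^3 ,
\]
which tends to $-\infty$ as $v_0\to -\infty$. In particular the quartic cushion $\tfrac{43}{18}v_0^4 + \tfrac{43}{3}v_0^2 v_4^2 + \tfrac{43}{2}v_4^4$ you intended to use ``to control the negative cubic contributions'' has entirely cancelled, so the Young-type argument you sketch cannot close.

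A repair along your lines would require the case split: for $L<0$ the point $(v_0,v_4)$ is trapped in the bounded ellipse $\{\tfrac{43}{3}v_0^2 + 43v_4^2 + \tfrac{77\sqrt 6}{6}v_0 - \tfrac{3\sqrt 6}{2}v_4 < 0\}$, on which the cubic $86\varphi_0 - L^2$ can be bounded by hand; for $L\geq 0$ one must verify the separate quartic inequality $\varphi_0\geq 0$. The paper avoids this bookkeeping altogether: after the same symmetrisation it shows that $\varphi$ is coercive on~$\Rr^3$ and then proves, by direct elimination in the cubic system $\nabla\varphi(v_0,v_1,v_4)=0$, that for $h$ close to~$1$ the origin is the \emph{only} critical point, hence the global minimum.
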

\begin{proof} 
From Lemma~\ref{lemma: lower bound psi}, we
can assume without loss of generality that $v_2 = v_3 = 0$, so
 \[
\begin{split}
\varphi(v_0, \, v_1, \, v_4) &= \frac 25 v_0^2 + v_4^2 + \sqrt{6} \left(v_0v_4^2 - \frac 32 v_4v_1^2 - \frac12 v_0 v_1^2 - \frac 19 v_0^3 \right) \\
                             &\qquad + \frac38 \left(\frac23 v_0^2 + 2v_1^2 + 2v_4^2 \right)^2 + 5\left( \frac23 v_0^2 + 2\sqrt{\frac23}h v_0 + 2v_1^2 + 2v_4^2\right)^2 .
\end{split}
\]
As a function of $(v_0, \, v_1, \, v_4)\in\Rr^3$, $\varphi$ is smooth and bounded from below, since
\[
 \varphi(v_0, \, v_1, \, v_4) \geq \sqrt{6} \left(v_0v_4^2 - \frac 32 v_4v_1^2 - \frac12 v_0 v_1^2 - \frac 19 v_0^3 \right) + \frac38 \left(\frac23 v_0^2 + 2v_1^2 + 2v_4^2 \right)^2 \to +\infty
\]
as $\|(v_0, \, v_1, \, v_4)\| \to +\infty$. Thus, $\varphi$ has a
global minimum, which is also a critical point. We claim that $v_0
= v_1 = v_4 = 0$ is the unique critical point for $\varphi$, when
$h$ is sufficiently close to $1$. This implies, in particular,
that $v_0 = v_1 = v_4 = 0$ is a global minimum of $\varphi$ and
the lemma follows.

For the sake of simplicity, we denote the triplet $(v_0, \, v_1,
\, v_4)$ by  $(x, \, y, \, z)$.

\setcounter{step}{0}
\begin{step}[Any critical point satisfies $y = 0$]
A critical point $(x, \, y, \, z)$ is a solution of the system $\nabla \varphi = 0$, that is,
\begin{equation}\label{critical point}
  \begin{aligned}
  \sqrt 6\left(z^2 - \frac{y^2}{2} - \frac{x^2}{3}\right) + \frac{86}{9} \, x\left(x^2 + 3\, y^2 + 3\, z^2\right)
     + \left(\frac 45 + \frac{80}{3}\, h^2\right) x \hspace{1.9cm} & \\
     +\frac{40}{3} \sqrt 6 \, h\left(x^2 + y^2 + z^2\right) &= 0 \\
  \frac{\sqrt 6}{9} \, y \left(-27\, z - 9\, x + 129\sqrt 6 \, z^2 + 129\sqrt 6 \, y^2 + 43\sqrt 6 \, x^2 + 240 \, h x\right) &= 0  \\
  2 \sqrt 6 \, xz - \frac 32 \sqrt 6\,  y^2 + \frac{86}{3}\, zx^2 + 86\, zy^2 + 86\, z^3 + 2\, z + \frac{80}{3}\sqrt 6 \, h xz &= 0 . 
  \end{aligned}
\end{equation}
Let $y\neq 0$. Then,
\begin{equation} \label{y critical}
 y^2 = \frac{1}{129\sqrt 6}\left(27\, z + 9\, x - 129\sqrt 6\,  z^2 - 43\sqrt 6\,  x^2 - 240 \, h x\right) .
\end{equation}
We substitute this value of $y^2$ into Equation~\eqref{critical point}.
Note that the $xy^2$-term in the first equation expands into several terms:
\[
 \frac{86}{3} x y^2 = \sqrt 6\, xz + \frac{2}{\sqrt 6}\, x^2 - \frac{86}{3} \, xz^2 - \frac{86}{9} \,  x^3 - \frac{160}{3\sqrt 6} \, h x^2 .
\]
Thus, the cubic $x^3$ and~$xz^2$-terms cancel out when we inject this expression into~\eqref{critical point}.
Similarly, the~$xz^2$ and~$z^3$-terms in the third equation cancel out because
\[
 \frac{86}{3} z y^2 = 3\sqrt 6\, z^2 + \sqrt 6\, xz - 86 \, z^3 - \frac{86}{3} \,  x^2z - \frac{160}{\sqrt 6} \, h xz .
\]
So all the cubic terms in~\eqref{critical point} disappear and we obtain
\[
 \begin{aligned}
  \frac{329}{430} \, x + \frac{80}{43} \, hx + \frac{80}{43} \, h^2 x - \frac{9}{86} \, z + \frac{120}{43} \, hz + 
    \frac{\sqrt{6}}{6} \, x^2 + \sqrt{6} \, xz + \frac{3}{2}\sqrt{6} \, z^2 &= 0 \\
  - \frac{9}{86} \, x + \frac{120}{43} \, hx + \frac{145}{86} \, z + \frac{\sqrt{6}}{2} \, x^2 + 3 \sqrt{6} \, xz 
    + \frac{9}{2}\sqrt{6} \, z^2 &= 0
\end{aligned}
\]
This system can be further simplified by taking a linear combination of the two equations (we multiply the first equation by~$3$, the second by~$-1$ and add the two equations). We obtain
\begin{equation*} 
 \begin{aligned}
  516\, x + 600\, hx + 1\, 200\, h^2 x - 430 \, z + 1\, 800 hz &= 0 \\
  - 9\, x + 240 \, h x + 145\, z + 43\sqrt 6\,  x^2 + 258\sqrt 6\, xz +  + 387\sqrt 6\,  z^2 &= 0.
 \end{aligned}
\end{equation*}
This is a system of second degree in $(x, \, z)$, so it can be easily solved. 
There are two solutions: $x = z = 0$, and~$x = x_0(h)$, $z= z_0(h)$ where~$x_0$, $z_0$ are algebraic functions of~$h$:
\[
 \begin{aligned}
  x_0(h) &:= -\frac{125\sqrt 6}{2} \cdot \frac{473 - 604 \, h - 7\,480 \, h^2 + 7\,200 \, h^3}{978\,121 + 3\,560\,400 \, h^2 + 3\,240\,000 \, h^4} \\[6pt]
  z_0(h) &:= \frac{75\sqrt 6\left(43 + 50 \, h + 100 \, h^2\right)\left(40 \, h^2 - 32 \, h-11\right)}{978\,121 + 3\,560\,400 \, h^2 + 3\,240\,000 \, h^4} .
 \end{aligned}
\]
By substituting~$x = x_0(h)$, $z = z_0(h)$ into Formula~\eqref{y critical}, we write $y^2$ as an algebraic function of~$h$.
Taking the limit as $h\to 1$, we get 
\[
 y^2 \to - \frac{441\,133\,354\,650}{60\,505\,388\,947\,441} < 0 
\]
which is clearly a contradiction. Thus, there exists a value $h_0\in (0, \, 1)$ such that any critical point of $\varphi$ satisfies $y = 0$ for $h \geq h_0$.
\end{step}

\begin{step}[Any critical point satisfies $z=0$]
 We set $y=0$ in Equation~\eqref{critical point}:
 \begin{equation} \label{xz critical 2}
  \begin{aligned}
   \sqrt 6\, z^2 - \frac{\sqrt 6}{3}\, x^2 + \frac{86}{9}\,x\left(x^2 + 3\, z^2 \right) + \left( \frac 45 + \frac{80}{3}\, h^2\right)\, x + \frac{40}{3}\sqrt 6 \, h\left(x^2 + z^2\right) &= 0 \\
   \frac{\sqrt 6}{9} z \left(3\sqrt 6 + 18 \, x + 129\sqrt 6\, z^2 + 43\sqrt 6\, x^2 + 240 \, hx \right) &= 0
  \end{aligned}
 \end{equation}
Suppose that $z\neq 0$. Then,
\[
 z^2 = -\frac{1}{129}\left(3 + 3\sqrt 6\, x + 43\, x^2 + 40\sqrt 6 \, hx\right) .
\]
We eliminate the variable $z$ from~\eqref{xz critical 2} and
obtain an equation for $x$:
\[
 -860\sqrt 6\, x^2 -4\left(1 - 600\, h + 300 \, h^2\right)\, x - 15\sqrt 6 - 200 \sqrt 6\, h  = 0 .
\]
This equation has no real root for $h=1$. Therefore, we conclude
that there exists $h_*\in (h_0, \, 1)$ such that any critical
point of $\varphi$ has $z = 0$ for $h \geq h_*$.
\end{step}

\begin{step}[Conclusion]
 Substituting $y = z = 0$ into Equation~\eqref{critical point} results in an equation for $x$:
 \[
 \frac{1}{45}x \left(430\, x^2 + \left(-15\sqrt 6 + 600 \sqrt 6\, h\right)\, x + 36 + 1 \, 200 h^2 \right) = 0 .
 \]
 The discriminant of the second-order factor is
 \[
  \left(-15\sqrt 6 + 600 \sqrt 6\, h \right)^2 - 4\cdot 430 \left(36 + 1 \, 200 \, h^2\right) = -60 \, 570 - 108 \, 000 \, h + 96 \, 000 \, h^2 ,
 \]
 which is strictly negative for $0 \leq h \leq 1$.
 Thus, the system~\eqref{critical point} has the unique solution $x = y = z = 0$ for $h \geq h_*$. \qed
\end{step}
 \let\qed\relax
\end{proof}

The proof of Theorem~\ref{th:2} now follows.

\begin{proof}[Proof of Theorem~\ref{th:2}]
Fix a radius~$R > 1$ and let~$h_*$ be given by Lem\-ma~\ref{lemma: phi positive}.
From Lemma~\ref{lemma: h}, we can find $\tau_1$ such that when $t\geq \tau_1$, 
the inequality $h \geq h_*$ holds for $r \in (1, \, R)$. Let $\tau_2$ be such that
\begin{equation} \label{t h+}
\frac{t}{8} \geq \frac{43}{8}h_+
\end{equation}
for $t \geq \tau_2$ (such a $\tau_2$ exists because $h_+\leq C\sqrt t$ for $t\gg 1$).
Choose $\alpha = 2/5$, $\beta = 1$ and let $t^* = t^*(2/5, \, 1)$ be given by Proposition~\ref{prop: stability ineq}. Finally, set
\[
 \tau := \max\{\tau_1, \, \tau_2, \, t^*\} .
\]
Note that~$\tau_1$,~$\tau_2$, $t^*$ and hence~$\tau$ can be chosen independently of~$R$, by Remark~\ref{remark:tau}. 
We fix $t\geq \tau$ and an admissible map $\Qvec\in W^{1, 2}(\Omega; \, S_0)$, and we write $\Qvec = \Hvec + \Vvec$.
From Equation~\eqref{second variation V}, we deduce that
 \[
  \begin{split}
   I[\Qvec] - I[\Hvec] &= \frac 12 \delta^2 I[\Hvec] + \int_\Omega \left\{-\frac{\sqrt{6}h_+}{2} \tr\Vvec^3 + \frac{t + 3h_+}{8} \left(4(\Hvec\cdot\Vvec)\abs{\Vvec}^2 + \abs{\Vvec}^4\right)  \right\}~\d V .
  \end{split}
 \]
 Using Proposition~\ref{prop: stability ineq} with $\alpha = 2/5$ and $\beta = 1$, we obtain
 \begin{equation} \label{remark1}
  \begin{split}
   I[\Qvec] - I[\Hvec] &\geq \int_\Omega \bigg\{\frac{2h_+}{5} v_0^2 + h_+\left(v_3^2 + v_4^2 \right) -\frac{\sqrt{6}h_+}{2} \tr\Vvec^3  \\
   &\qquad + \frac{3h_+}{2}(\Hvec\cdot\Vvec)\abs{\Vvec}^2 + \frac{3h_+}{8}\abs{\Vvec}^4 + \frac{t}{8}\left(2(\Hvec\cdot\Vvec) + \abs{\Vvec}^2\right)^2 \bigg\}~\d V .
  \end{split}
 \end{equation}
 Clearly, it holds that
 \begin{equation} \label{remark2}
  - \frac{3h_+}{2}(\Hvec\cdot\Vvec) \abs{\Vvec}^2 \geq - \frac{3h_+}{8}\left(2(\Hvec\cdot\Vvec) + \abs{\Vvec}^2\right)^2 .
 \end{equation}
 Combining~\eqref{remark1},~\eqref{remark2} and~\eqref{t h+}, we
 deduce that
 \[
 I[\Qvec] - I[\Hvec] \geq h_+ \int_\Omega \varphi(\Vvec)~\d V ,
 \]
where $\varphi$ is the function defined in Lemma~\ref{lemma: phi positive}.
Since $\varphi\geq 0$ and $\varphi(\Vvec) = 0$ if and only if $\Vvec = 0$, the theorem follows.
\end{proof}

\section{Conclusions}
\label{sec:con}

We study nematic equilibria within 3D spherical shells with Dirichlet radial conditions on both spherical concentric boundaries.
We work within the Landau-de Gennes theory for nematic liquid crystals and show that this problem has a radial equilibrium, 
which we refer to as the radial-hedgehog solution. We define the radial-hedgehog solution by analogy with the definition
on a 3D spherical droplet, as used in~\cite{mkaddem&gartland2, sima2012} and work with temperatures below
the critical nematic supercooling temperature, defined by $t\geq 0$. 
We focus on the local and global stability of this defect-free equilibrium.
We prove that the radial-hedgehog solution is the unique global
minimizer for all shell widths smaller than an explicit critical value computed in Theorem~\ref{th:1}. 
In Theorem~\ref{th:2}, we prove that the radial-hedgehog solution is the unique global minimizer for all temperatures~$t\geq \tau$,
for all shell widths (independent of $t$) and we specify the largeness of $\tau$ in Propositions~\ref{prop: stability large t}
and~\ref{prop: stability ineq} and Lemma~\ref{lemma: h}.
For both theorems, we control the second variation and establish conditions for local stability 
i.e. positivity of the second variation, by means of a Poincar\'e-type inequality in Theorem~\ref{th:1} and by adapting arguments
from~\cite{ignat2014} in Theorem~\ref{th:2}. For Theorem~\ref{th:2}, we obtain an explicit positive lower bound for the second
variation, as stated in Proposition~\ref{prop: stability ineq}. The final steps focus on how to control the residual terms in the
energy expansions (which include quadratic residual terms from the second variation, cubic and quartic terms in the energy expansion) 
using the positive bounds for the second variation. Our methods will work or can be adapted to study the local and global stability of
defect-free uniaxial equilibria in model geometries, where the order parameter is bounded away from zero. One physically relevant 
example of such a defect-free equilibrium is the Vertically Aligned State in the Zenithally Nematic Device (see~\cite{alexraisch}).
Equally, these methods do give insight into how one may rigorously study the loss of stability of defect-free equilibria which might
indicate the creation of defects or singularities. 
It is possible that defect-free equilibria lose global minimality / stability before they become unstable. 
The computations in Sections~\ref{sec:global} and~\ref{sec:t} could be useful in tracking the global stability of defect-free
equilibria as a function of model parameters and onset of competing equilibria with lower energy. 
This will be investigated in future work.

\section*{Acknowledgements}

A.M.'s research is 
supported by an EPSRC Career Acceleration Fellowship EP/J00\-1686/1 and EP/J001686/2, an OCIAM
Visiting Fellowship, Royal Society International Exchange Grant and a University of Bath internationalization grant. This work started when A.M. visited M.R. in April 2013 and her visit was funded by a Royal Society International Exchange Grant.
M.R. visited A.M. in November 2013 and A.M. and M.R. gratefully acknowledge funding from the Royal Society International Exchange Grant.
G.C. would like to thank the Department for Mathematical Sciences, Bath, for hosting him and funding two collaborative visits whilst this work was being carried out.
He is also very grateful to his Ph.D. advisor Fabrice Bethuel, for constant support and helpful advice.




 \bibliographystyle{acm} 
 \bibliography{PhysicaD2015}
 
\begin{figure} \label{fig: G}
 \includegraphics[width = .9\textwidth, height = .27\textheight]{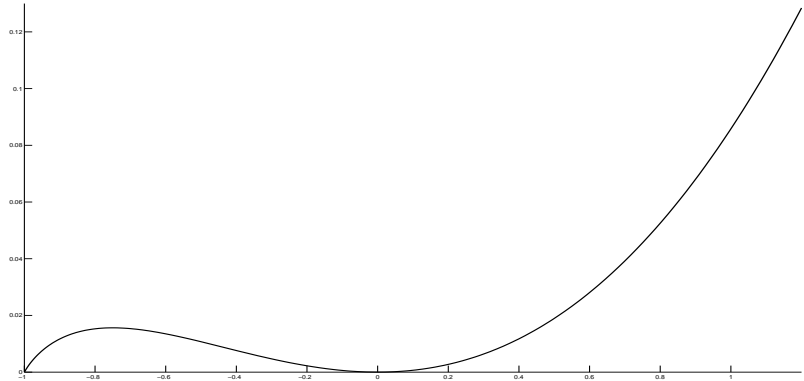}
 \caption{A plot of of the function $G$.}
\end{figure}


%
%

\end{document}